\documentclass[11pt,a4paper]{article}
\usepackage[utf8]{inputenc}
\usepackage{amsmath,amsfonts,amssymb,amsthm,subcaption}
\usepackage{graphicx}
\graphicspath{ {figures/} }
\usepackage{hyperref}

\textwidth 17cm \textheight 23cm \voffset -2.5cm \oddsidemargin -0.3cm

\author{Rodrigo T. Sato Mart\'in de Almagro\footnote{\href{mailto:rodrigo.t.sato@fau.de}{rodrigo.t.sato@fau.de}} 
	\\[2mm]
	{\small  Friedrich-Alexander-Universit\"at Erlangen-N\"urnberg (FAU),}\\
	{\small  Institute of Applied Dynamics}\\
	{\small Immerwahrstrasse 1, 91058 Erlangen, Germany}}
\title{High-order integrators for Lagrangian systems on homogeneous spaces via nonholonomic mechanics}


\newtheorem{proposition}{Proposition}[section]

\theoremstyle{remark}
	\newtheorem*{remark}{Remark}

\theoremstyle{definition}

\theoremstyle{definition}

\providecommand{\keywords}[1]
{
  \small	
  \textbf{\textit{Keywords---}} #1
}

\begin{document}
\maketitle

\begin{abstract}
	In this paper, high-order numerical integrators on homogeneous spaces will be presented as an application of nonholonomic partitioned Runge-Kutta Munthe-Kaas (RKMK) methods on Lie groups.
	A homogeneous space $M$ is a manifold where a group $G$ acts transitively. Such a space can be understood as a quotient $M \cong G/H$, where $H$ a closed Lie subgroup, is the isotropy group of each point of $M$. The Lie algebra of $G$ may be decomposed into $\mathfrak{g} = \mathfrak{m} \oplus \mathfrak{h}$, where $\mathfrak{h}$ is the subalgebra that generates $H$ and $\mathfrak{m}$ is a subspace. Thus, variational problems on $M$ can be treated as nonholonomically constrained problems on $G$, by requiring variations to remain on $\mathfrak{m}$.
	Nonholonomic partitioned RKMK integrators are derived as a modification of those obtained by a discrete variational principle on Lie groups, and can be interpreted as obeying a discrete Chetaev principle. These integrators tend to preserve several properties of their purely variational counterparts.
\end{abstract}

\keywords{
Homogeneous spaces,
High-order integrators,
Runge-Kutta Munthe-Kaas,
Nonholonomic mechanics,
Lie groups
}

\section{Introduction}
Geometric integrators are numerical methods for continuous-time dynamical systems that preserve the geometric structure of the system. This structure preservation may focus on different aspects of the systems to be integrated, but can be mainly summarized as the preservation of certain submanifolds where the system evolves. Such are the cases of symplectic integrators, of which variational integrators are a subset, and energy-preserving methods.\\

In this work we focus on geometric integrators obtained as modifications of variational integrators, applied to dynamical systems on homogeneous spaces, which are particular kinds of smooth manifolds. The proposed methods intend to preserve some of the characteristics of the variational integrators from which they derive, such as good long-term energy behaviour or preservation of momenta, while ensuring that the evolution remains constrained to the corresponding manifold.\\

Previous works on this topic can be grouped into general purpose methods and methods adapted to mechanical systems, i.e. Lagrangian or Hamiltonian systems which preserve a rich geometric structure in their evolution. Some interesting and broad-reaching articles of the first group are \cite{MuntheKaas_Zanna} and \cite{MuntheKaas_Verdier}. 
On the second group we have for instance \cite{LeeLeokMcClamroch_Spheres}, \cite{ZenkovLeokBloch} and are mainly restricted to low-order methods (order 2 convergence).\\

This work should be cast into the second group. Our intention is to offer methods that can achieve high order and be regarded as natural extensions of variational methods to homogeneous spaces. In order to do so, we frame the problems in higher-dimensional spaces in a natural manner, and constrain the dynamics to the original homogeneous space. In doing so, we transform the problems into nonholonomic problems, where the resulting constraints are non-integrable. See \cite{Cortes02} or \cite{Bloch03} for modern reviews on the topic.\\

\section{Theoretical setup}
\label{sec:theory}
A homogeneous space, see for instance \cite{Lee03}, is a pair $(G,M)$, where $M$ is a smooth manifold and $G$ is a connected Lie group that acts transitively on $M$. In this work we assume that both $G$ and $M$ are finite-dimensional.\\

Let us denote the left action of the group on $M$ by $\sigma: G \times M \to M$. Transitivity of the action means that for every pair $x,y \in M$, there exists $g \in G$ such that $\sigma(g,x) = y$. Typical examples to keep in mind throughout this text are $(n-1)$-spheres, $(SO(n),S^{n-1})$ with $n \geq 2$. We will denote the action of the group on itself by simple juxtaposition, i.e. if $g, h \in G$, then $g h, h g \in G$.\\

Let $\mathfrak{g}$ denote the Lie algebra of $G$. This is identified with the tangent space at the identity $e \in G$, i.e. $\mathfrak{g} \cong T_e G$. Given $\xi \in \mathfrak{g}$, the action at a point $x \in M$ associates with $\xi$ a tangent vector at that point
\begin{equation*}
\xi_M (x) := \left.\frac{\mathrm{d}}{\mathrm{d}t} \right\vert_{t=0} \sigma(\exp(t \xi),x) = D_1 \sigma_{(e,x)}(\xi)\,.
\end{equation*}
where $\exp: \mathfrak{g} \to G$ denotes the exponential map of the Lie group.

It is well known that a homogeneous space is diffeomorphic to the quotient $G/H$ where $H \subset G$ is the isotropy group of an arbitrary point on $M$, i.e.
\begin{equation*}
H := \left\lbrace h \in G \,\vert\, \sigma(h,x) = x\right\rbrace\,.
\end{equation*}
We denote the quotient map by $\pi: G \to G/H \cong M$. Choosing an \emph{origin}, e.g. $\pi(e) = x_0 \in M$, we can give an explicit expression for this map as $\pi(g) = \sigma(g,x_0)$.\\

Let $x \in C^k(I,M)$, with $k \geq 2$, denote a curve defined on the interval $I \subset \mathbb{R}$. By transitivity, there exist curves $g \in C^k(I,G)$ such that $x = \pi(g)$. However, due to isotropy, given a curve on $M$ there is no unique curve on $G$ associated to it and a choice needs to be made to determine the problem completely. One possible choice is as follows. Let $\mathfrak{h}$ denote the Lie algebra of $H$. Then, it is always possible to decompose the algebra of $G$ as a direct sum of vector spaces (i.e. not necessarily a direct sum of Lie algebras), $\mathfrak{g} = \mathfrak{m} \oplus \mathfrak{h}$. To simplify this process, we assume a non-degenerate symmetric bilinear form on $\mathfrak{g}$, $B: \mathfrak{g} \times \mathfrak{g} \to \mathbb{R}$, and since $\mathfrak{g}$ is a finite dimensional vector space, it is always possible to find one. With it, we can define an isomorphism between the algebra and its dual, $\flat_B: \mathfrak{g} \to \mathfrak{g}^*$. This allows us to define
\begin{equation*}
\mathfrak{m} := \ker \, \flat_B(\mathfrak{h}) = \left\lbrace \xi \in \mathfrak{g} \, | \, B(\xi, \eta) = 0, \forall \eta \in \mathfrak{h} \right\rbrace\,.
\end{equation*}
Clearly, $D_1 \sigma_{(e,x)}(\mathfrak{m}) = T_x M$, for all $x \in M$.

\begin{remark}
Every tangent space of $G$ can be put into correspondence with $\mathfrak{g}$ by left or right translation. Therefore, the direct sum decomposition described above leads to two generally inequivalent distributions on $G$, i.e. subspaces $\mathcal{D}_g \subset T_g G$ for every $g \in G$, depending on whether we propagate via left or right translation.
\end{remark}

\begin{remark}
When $H$ is semisimple its Killing form is non-degenerate and provides a natural choice of symmetric bilinear form and, thus, a natural decomposition which is also bi-invariant, leading to a unique distribution. In addition, when $H$ is connected the space $M$ is said to be a reductive homogeneous space.
\end{remark}

Given that we can define a curve on $G$ from another curve on the algebra, i.e.
\begin{equation*}
\dot{g}(t) = T_e \mathcal{L}_{g(t)} \mathrm{H}(t) = T_e \mathcal{R}_{g(t)} \eta(t), \quad \forall t \in I
\end{equation*}
with $\eta, \mathrm{H}\footnote{The lower and uppercase Greek letters `eta' have been chosen in order to differentiate it from $\xi$, $\Xi \in \mathfrak{g}$ used in the discrete case, with a completely different meaning.} \in C^k(I,\mathfrak{g})$, we may restrict these curves on the algebra to the subspace $\mathfrak{m}$. Let us analyse the situation in more detail.

\begin{proposition}
\label{prp:action_translation}
Let $g \in C^k([t_a,t_b],G)$, $k \geq 1$ such that $\dot{g}(t) = T_e \mathcal{R}_{g(t)} \eta(t)$, $\forall t \in [t_a,t_b]$. Denoting $x(t) = \sigma(g(t),x_0) \in M$ then:
\begin{equation*}
D_1 \sigma_{(g(t),x_0)}(\dot{g}(t)) = D_1 \sigma_{(e,x(t))}(\eta(t)),
\end{equation*}
where $D_i$ denotes differentiation with respect to the $i$-th argument.
\end{proposition}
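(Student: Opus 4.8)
The plan is to recognize that the left-hand side is nothing but the velocity $\dot{x}(t)$ of the projected curve, and then to rewrite the action so that the right-trivialized velocity $\eta$ appears naturally. To this end I would first introduce the partial map $\sigma^{x_0}: G \to M$, $\sigma^{x_0}(g) := \sigma(g,x_0)$, so that $x(t) = \sigma^{x_0}(g(t))$. Since the second slot is frozen at $x_0$, the chain rule gives $\dot{x}(t) = T_{g(t)}\sigma^{x_0}(\dot{g}(t)) = D_1 \sigma_{(g(t),x_0)}(\dot{g}(t))$, which is precisely the expression on the left of the claimed identity.

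The central step is to exploit that $\sigma$ is a \emph{left} action, i.e.\ $\sigma(hg,x_0) = \sigma(h,\sigma(g,x_0))$. Fixing $g = g(t)$ and regarding $h$ as the variable, this reads as an equality of maps $G \to M$, namely $\sigma^{x_0} \circ \mathcal{R}_{g(t)} = \sigma^{x(t)}$, where $\mathcal{R}_{g(t)}(h) = h\,g(t)$ and $\sigma^{x(t)}(h) := \sigma(h,x(t))$. This single identity is what couples right translation on $G$ to the action on $M$, and it is the observation on which the whole proof hinges.

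I would then differentiate this identity at the identity $h=e$ and evaluate the resulting linear maps on $\eta(t)$. On the left, the chain rule yields $T_{g(t)}\sigma^{x_0}\bigl(T_e\mathcal{R}_{g(t)}\eta(t)\bigr)$; by hypothesis $T_e\mathcal{R}_{g(t)}\eta(t) = \dot{g}(t)$, so this collapses to $D_1 \sigma_{(g(t),x_0)}(\dot{g}(t))$. On the right, $T_e\sigma^{x(t)}(\eta(t)) = D_1\sigma_{(e,x(t))}(\eta(t))$, which is exactly the fundamental vector field $\eta(t)_M(x(t))$. Equating the two gives the statement.

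The step I would handle most carefully is the bookkeeping of the action property in its right-translated form: verifying that composition on the right by $\mathcal{R}_{g(t)}$ (as opposed to left translation) is precisely what pairs with the right-trivialized velocity $\eta$, and that the two partial differentials $D_1\sigma$ are anchored at the correct base points, $(g(t),x_0)$ on one side and $(e,x(t))$ on the other. Once the identity $\sigma^{x_0}\circ\mathcal{R}_{g(t)} = \sigma^{x(t)}$ is in place, everything else is a routine application of the chain rule.
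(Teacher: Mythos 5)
Your proof is correct, and it rests on exactly the same key fact as the paper's: the left-action property $\sigma(hg,x_0)=\sigma(h,\sigma(g,x_0))$, read as the identity $\sigma^{x_0}\circ\mathcal{R}_{g(t)}=\sigma^{x(t)}$ and differentiated at $h=e$ in the direction $\eta(t)$. The difference is purely in how that differentiation is realized. The paper builds an auxiliary concatenated curve $h(t)$, equal to $g(t)$ before $t_c$ and to $\tilde{g}(t)g(t_c)$ after, with $\tilde{g}(t_c)=e$ and $\dot{\tilde{g}}(t_c)=\eta(t_c)$, and computes $\frac{\mathrm{d}}{\mathrm{d}t}\big\vert_{t=t_c}\sigma(h(t),x_0)$ in two ways; the curve $\tilde{g}$ is precisely a representative of the tangent vector $\eta(t_c)\in T_eG$ at which you apply the chain rule abstractly. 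Your version buys a cleaner argument: it avoids introducing the piecewise curve (whose matching conditions at $t_c$ have to be checked for the two one-sided derivatives to agree) and makes it transparent that the statement is a pointwise identity of linear maps, valid at each $t$ separately, rather than something about curves. The paper's version, in exchange, makes explicit the dynamical reading that $\dot{x}(t_c)$ is generated by flowing from $x(t_c)$ along the group direction $\eta(t_c)$. Your bookkeeping of base points, $(g(t),x_0)$ on the left and $(e,x(t))$ on the right, is exactly right, so there is no gap.
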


\begin{proof}
To see this, choose $h \in C^k([t_a,t_b],G)$, $k \geq 1$,
\begin{equation*}
h(t) = \left\lbrace
\begin{array}{rl}
g(t), & t_a \leq t < t_c\,,\\
\tilde{g}(t) g(t_c) & t_c \leq t \leq t_b \,,
\end{array}
\right. \quad \text{with } t_c \in [t_a,t_b]\,.
\end{equation*}
In order to have the right continuity, $\tilde{g}(t_c) = e$ and $\dot{\tilde{g}}(t_c) = T_{g(t_c)} \mathcal{R}_{g^{-1}(t_c)} \dot{g}(t_c) = \eta(t_c)$. This means that, on the one hand,
\begin{equation*}
\left.\frac{\mathrm{d}}{\mathrm{d}t} \right\vert_{t = t_c} \sigma(h(t),x_0) = D_1 \sigma_{(h(t_c),x_0)}(\dot{h}(t_c)) = D_1 \sigma_{(g(t_c),x_0)}(\dot{g}(t_c))\,,
\end{equation*}
but on the other,
\begin{align*}
\left.\frac{\mathrm{d}}{\mathrm{d}t} \right\vert_{t = t_c} \sigma(h(t),x) &= \left.\frac{\mathrm{d}}{\mathrm{d}t} \right\vert_{t = t_c} \sigma(\tilde{g}(t) g(t_c),x) = \left.\frac{\mathrm{d}}{\mathrm{d}t} \right\vert_{t = t_c} \sigma(\tilde{g}(t), \sigma(g(t_c),x_0))\\
&= \left.\frac{\mathrm{d}}{\mathrm{d}t} \right\vert_{t = t_c} \sigma(\tilde{g}(t), x(t_c)) = D_1 \sigma_{(e, x(t_c))} (\eta(t_c))\,.
\end{align*}
Since this is true for all $t_c \in [t_a,t_b]$, this concludes our proof.
\end{proof}

Prop.~\ref{prp:action_translation} says that the tangent vector $\eta_M(x(t)) \in T_{x(t)} M$ associated with a curve $x = \sigma(g,x_0) \in C^k(I,M)$ at any given $t \in I$ is directly related with $\eta(t) \in \mathfrak{g}$. Thus, assuming in particular a constant $\eta \in \mathfrak{h}$, since $H$ is the isotropy group of every point of $M$, we have that
\begin{equation*}
\eta_M(x(t)) = D_1 \sigma_{(e, x(t))} (\eta) = \left.\frac{\mathrm{d}}{\mathrm{d}s} \right\vert_{s = 0} \sigma(\exp(s \eta), x(t)) = \left.\frac{\mathrm{d}}{\mathrm{d}s} \right\vert_{s = 0} \sigma(e, x(t)) = 0.
\end{equation*}

This means that only elements in $\mathfrak{m}$ generate actual motions on $M$, and thus it makes sense to restrict to curves on $g(t)$ whose tangent lie on the chosen distribution, $\mathcal{D} = \bigsqcup_{g \in G} \mathcal{D}_g$, provided by the decomposition.\\

Had we chosen a representation of the curve $g \in C^k(I,G)$ in terms of $\dot{g}(t) = T_e \mathcal{L}_{g(t)} \mathrm{H}(t)$ similar arguments would be also valid. For this, an analogous result to Prop.~\ref{prp:action_translation} holds, for which we omit the proof:
\begin{proposition}
\label{prp:action_translation_left}
Let $g \in C^k([t_a,t_b],G)$, $k \geq 1$ such that $\dot{g}(t) = T_e \mathcal{L}_{g(t)} \mathrm{H}(t)$, $\forall t \in [t_a,t_b]$. Then:
\begin{equation*}
D_1 \sigma_{(g(t),x_0)}(\dot{g}(t)) = D_2 \sigma_{(g(t),x_0)}( D_1 \sigma_{(e,x_0)}(\mathrm{H}(t)) )
\end{equation*}
\end{proposition}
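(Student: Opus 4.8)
My plan is to compute the left-hand side directly from the definition of the left-trivialized velocity and then exploit the homomorphism property of the action, rather than repeating the auxiliary-curve construction used in Proposition~\ref{prp:action_translation}. Since $\dot{g}(t) = T_e \mathcal{L}_{g(t)} \mathrm{H}(t)$, I can realize this tangent vector as the derivative at $s=0$ of the curve $s \mapsto g(t)\exp(s\mathrm{H}(t))$. Hence, for a fixed $t \in [t_a,t_b]$,
\begin{equation*}
D_1 \sigma_{(g(t),x_0)}(\dot{g}(t)) = \left.\frac{\mathrm{d}}{\mathrm{d}s}\right\vert_{s=0} \sigma\bigl(g(t)\exp(s\mathrm{H}(t)), x_0\bigr).
\end{equation*}

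The key step is to use that $\sigma$ is a left action, so $\sigma(g(t)\exp(s\mathrm{H}(t)), x_0) = \sigma(g(t), \sigma(\exp(s\mathrm{H}(t)), x_0))$. Writing $\gamma(s) = \sigma(\exp(s\mathrm{H}(t)), x_0)$, I observe that $\gamma(0) = \sigma(e,x_0) = x_0$ and, by the very definition of the infinitesimal generator recalled at the start of Section~\ref{sec:theory}, that $\gamma'(0) = D_1 \sigma_{(e,x_0)}(\mathrm{H}(t))$. Applying the chain rule to the composition $s \mapsto \sigma(g(t), \gamma(s))$, in which only the second argument varies, then gives
\begin{equation*}
\left.\frac{\mathrm{d}}{\mathrm{d}s}\right\vert_{s=0} \sigma(g(t), \gamma(s)) = D_2 \sigma_{(g(t),x_0)}\bigl(\gamma'(0)\bigr) = D_2 \sigma_{(g(t),x_0)}\bigl( D_1 \sigma_{(e,x_0)}(\mathrm{H}(t)) \bigr),
\end{equation*}
which is exactly the claimed identity; since $t$ was arbitrary, the proof is complete.

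The computation is elementary, so the only point requiring genuine care — the natural analogue of the subtlety in the proof of Proposition~\ref{prp:action_translation} — is the correct bookkeeping of the two partial derivatives $D_1$ and $D_2$ under the chain rule. The essential structural fact is that \emph{left} translation on $G$ interacts with the \emph{left} action by absorbing the variation $\exp(s\mathrm{H}(t))$ into the \emph{second} slot of $\sigma$ while the base point stays fixed at $x_0$. This is precisely what produces a $D_2$ term transported by $\sigma_{(g(t),\,\cdot\,)}$, in contrast with the right-translation case of Proposition~\ref{prp:action_translation}, where the variation remains in the first slot and the generator is evaluated at the moving point $x(t)$.
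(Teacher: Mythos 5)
Your proof is correct. Note that the paper explicitly omits a proof of Proposition~\ref{prp:action_translation_left}, so the only available point of comparison is the proof of the right-translation analogue, Proposition~\ref{prp:action_translation}. There the author constructs an auxiliary piecewise curve $h$ that agrees with $g$ up to a time $t_c$ and continues as $\tilde{g}(t)\,g(t_c)$, then differentiates at $t_c$ from both sides. Your argument replaces that construction by the more direct device of realizing the left-trivialized tangent vector as the velocity of $s \mapsto g(t)\exp(s\mathrm{H}(t))$ and invoking the action axiom $\sigma(gh,x)=\sigma(g,\sigma(h,x))$ together with the chain rule. The two routes encode the same underlying idea --- factoring the variation through the action --- but yours avoids the piecewise construction and the attendant continuity and differentiability bookkeeping at $t_c$, and it makes transparent exactly why left translation lands the generator in the \emph{second} slot of $\sigma$ at the fixed base point $x_0$, in contrast with the right-translation case, where the variation stays in the first slot and the generator is evaluated at the moving point $x(t)$. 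Every step you take (realization of the tangent vector by a curve through $g(t)$, the homomorphism property of the left action, the identification $\gamma'(0)=D_1\sigma_{(e,x_0)}(\mathrm{H}(t))$ from the definition of the infinitesimal generator, and the final chain rule producing $D_2\sigma_{(g(t),x_0)}$) is justified, so there is no gap; your method would in fact also yield a slightly cleaner proof of Proposition~\ref{prp:action_translation} by the symmetric computation $\sigma(\exp(s\eta(t))g(t),x_0)=\sigma(\exp(s\eta(t)),x(t))$.
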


\begin{remark}
The map $\widehat{\sigma} : G \times TM \to TM$ is called the lifted action induced by $\sigma$ on the tangent bundle of $M$. Clearly, given $v \in T_x M$, and $g \in G$ we have that $D_2 \sigma(g,x)(v)$ represents the behaviour of $\widehat{\sigma}$ on the fibres.
\end{remark}

Thus, if $\mathrm{H} \in \mathfrak{h}$ no motion is induced on $M$, so it makes sense to restrict to $\mathrm{H} \in \mathfrak{m}$.

\section{Variational problems}
\label{sec:variational_problems}
A (first-order) Lagrangian system on $M$, see \cite{AbrahamMarsden}, is defined by a Lagrangian function $L: TM \to \mathbb{R}$ which is $C^\ell$, with $\ell \geq 2$, and integrable on $I$. One defines the associated action $\mathcal{J}: C^\ell(I,M) \to \mathbb{R}$ as
\begin{equation*}
\mathcal{J}[x] = \int_{I} L(\hat{x}(t))\, \mathrm{d}t
\end{equation*}
where $\hat{x} \in C^{\ell-1}(I,M)$ is the tangent lift of $x$ to $TM$, locally $\hat{x}(t) = (x^i(t),\dot{x}^i(t))$, $i = 1,...,\dim M$. Making use of the homogeneous space structure of $M$, by defining an origin, we can define the pullback of $L$ to $G$, $L^G: TG \to \mathbb{R}$, by $L^G = L \circ T\pi$ \cite{LeeLeokMcClamroch_Global}. This in turn, allows us to define an associated action $\mathcal{J}^G: C^\ell(I,G) \to \mathbb{R}$,
\begin{equation*}
\mathcal{J}^G[g] = \int_{I} L^G(\hat{g}(t)) \, \mathrm{d}t = \int_{I} (L \circ T\pi)(\hat{g}(t)) \, \mathrm{d}t\,.
\end{equation*}

Notice that, since the isotropy group $H$ is not required to be trivial, in general this Lagrangian will be singular as $\dim M \neq \dim G$. Locally, this means that the matrix formed by the coefficients
\begin{equation*}
\frac{\partial^2 L}{\partial \dot{g}^i \partial \dot{g}^j}, \quad i,j = 1,...,\dim G
\end{equation*}
is singular. However, once the problem is constrained so that $\dot{g}(t)$ is a left or right translation of an element $\eta(t) \in \mathfrak{m}$ the problem becomes regular, at least in the continuous case. If $\left\lbrace e_j\right\rbrace_{j = 0}^{\dim H}$, forms a basis of $\mathfrak{h}$, the nonholonomic constraints we need to impose on the algebra are
\begin{equation*}
\phi_j(\eta(t)) := B(\eta(t),e_j) = 0, \quad \forall j = 1, ..., \dim H\,.
\end{equation*}

Applying Chetaev's principle with fixed ends to $\mathcal{J}^G[g]$ we obtain
\begin{equation*}
\left\langle \left[ \frac{\partial L^G}{\partial g^i} - \frac{\mathrm{d}}{\mathrm{d} t}\left( \frac{\partial L^G}{\partial \dot{g}^i}\right)\right] ,  \delta g \right\rangle = 0\,, \quad \forall \delta g \in \mathcal{D}\,.
\end{equation*}

This is equivalent to
\begin{align*}
\frac{\mathrm{d}}{\mathrm{d} t}\left( \frac{\partial L^G}{\partial \dot{g}^i}\right) - \frac{\partial L^G}{\partial g^i} &= \lambda^j \frac{\partial \Phi_j}{\partial \dot{g}^i}\,,\quad i = 1,...,\dim G, \, j = 1,..., \dim H,\\
\Phi_j(g,\dot{g}) &= 0\,,
\end{align*}
where $\Phi: TG \to \mathbb{R}^{\dim H}$ is a constraint function such that $\Phi(\mathcal{D}) = 0$, and $\lambda^j$, are Lagrange multipliers. Another variant of these equations can be obtained if we write them in terms of the original Lagrangian:
\begin{align*}
D_1 \sigma_{(g,x_0)}^* \left[ \frac{\mathrm{d}}{\mathrm{d} t}\left( \frac{\partial L}{\partial \dot{x}^i}\right) - \frac{\partial L}{\partial x^i} \right] &= \lambda^j \frac{\partial \Phi_j}{\partial \dot{g}^i}\,,\\
\Phi_j(g,\dot{g}) &= 0\,,
\end{align*}

Moreover, we can define trivialized Lagrangians $\ell^G: G \times \mathfrak{g} \to \mathbb{R}$ by left or right trivialization, i.e. $\ell^G(g,\mathrm{H}) = L^G(g,T_e \mathcal{L}_{g} \mathrm{H})$ or $\ell^G(g,\eta) = L^G(g,T_e \mathcal{R}_{g} \eta)$ respectively. See for instance \cite{BouRabeeMarsden, Holm2}. In that case, the Euler-Lagrange equations can be rewritten as either
\begin{subequations}
\label{eq:Euler_Poincare_nonholon}
\begin{align}
\frac{\mathrm{d}}{\mathrm{d} t}\left( \frac{\partial \ell^G}{\partial \mathrm{H}^i}\right) - \mathrm{ad}_{\mathrm{H}}^*\, \frac{\partial \ell^G}{\partial \mathrm{H}^i} - T^*_e \mathcal{L}_{g} \frac{\partial \ell^G}{\partial g^i} &= \lambda^j \frac{\partial \phi_j}{\partial \mathrm{H}^i}\,,{eq:Euler_Poincare_nonholon}\\
\phi_j(\mathrm{H}) &= 0\,;\\
\frac{\mathrm{d}}{\mathrm{d} t}\left( \frac{\partial \ell^G}{\partial \eta^i}\right) + \mathrm{ad}_{\eta}^*\, \frac{\partial \ell^G}{\partial \eta^i} - T^*_e \mathcal{R}_{g} \frac{\partial \ell^G}{\partial g^i} &= \lambda^j \frac{\partial \phi_j}{\partial \eta^i}\,,\\
\phi_j(\eta) &= 0\,.
\end{align}
\end{subequations}
depending on the choice of left or right trivialization respectively.\\

If variations at the boundary are allowed, these contribute as
\begin{equation*}
\left.\left\langle (D_1 \sigma_{(g,x_0)})^* \frac{\partial L}{\partial \dot{x}}(x,\dot{x}), \delta g\right\rangle\right\vert_{\partial I} = \left.\left\langle \frac{\partial L^G}{\partial \dot{g}}(g,\dot{g}), \delta g\right\rangle\right\vert_{\partial I} = \left.\left\langle \frac{\partial \ell^G}{\partial \mathrm{H}}(x,\mathrm{H}), \mathrm{Z}\right\rangle\right\vert_{\partial I} = \left.\left\langle \frac{\partial \ell^G}{\partial \eta}(x,\eta), \zeta \right\rangle\right\vert_{\partial I}
\end{equation*}
where $\partial I$ represents the boundary of the interval $I$, $\mathrm{Z} := \left(\mathcal{L}_g^{-1}\right)_* \delta g$ and $\zeta := \left(\mathcal{R}_g^{-1}\right)_{*} \delta g$.

\section{Variational integrators}
\label{sec:variational_integrators}
Let us discretize a curve $g \in C^\ell(I,G)$, with $\ell \geq 2$ and $h > 0$, by a discrete curve $g_d : I_d \to G$ where $I_d$ is a constant step-size discretization of $I$ with step size $h$. This gives us a set of points $\left\lbrace g_k \right\rbrace_{k = 0}^N$, steps, where $N$ is the number of intervals of length $h$ in which $I$ is discretized.\\

Assume a (tangent) retraction \cite{AbsMahSep} on $G$, i.e. a smooth map $\tau: \mathfrak{g} \to G$, such that $\tau(0) = e$, $T_0 \tau \equiv \mathrm{Id}_{\mathfrak{g}}$, that is a diffeomorphism on its image. The exponential map is a retraction, but there may be others such as the Cayley map. Then, a discrete curve on a Lie group can be parametrized using left or right trivialization by elements on $\mathfrak{g}$ via such retractions, e.g. $g_{k+1} = g_k \tau(\xi_k)$ or $g_{k+1} = \tau(\xi_k) g_k$, respectively.\\

Moreover, each element $\xi_k$ can be parametrized by an internal subdivision in \emph{so-called} stages $\left\lbrace \Xi_k^i\right\rbrace_{i = 1}^s$. We call these elements stage variables and $s$ is the number of stages of the subdivision. With such a subdivision, we can obtain approximations of $g$ over each interval, namely $G_k^i = g_k \tau(\Xi_k^i)$ (left) or $G_k^i = \tau(\Xi_k^i) g_k$ (right). The coupling of this discretization with the application of a RKMK algorithm allows us to obtain approximations to solutions of ODEs on $G$ \cite{MuntheKaas,CeMaOw}.\\

It is common to reserve capital letters for stage variables, therefore, in this section we will stop using capitalization to distinguish between left and right trivialized magnitudes and we will simply separate both cases.\\

With the action of the group on $M$, we can generate discrete curves $x_d : I_d \to M$, $x_{k+1} = \sigma(g_k,x_{k})$ and stage subdivisions $X_k^i = \sigma(G_k^i, x_k)$.\\

We have seen in the previous section that a first order variational problem requires working with the tangent lift of curves. Using the tangent retraction in the continuous case we may write $g$ in terms of a curve $\xi \in C^\ell(I,\mathfrak{g})$, $g(t) = \tau(\xi(t))$. To compute the tangent lift of such a curve, we need the derivatives of the retraction, $\dot{g}(t) = T\tau_{\xi(t)} (\dot{\xi}(t))$. Using left or right trivialization we can write this as
\begin{equation*}
\dot{g}(t) = T_e \mathcal{L}_{\tau(\xi(t))} (\mathrm{d}^L \tau_{\xi(t)} \dot{\xi}(t)) = T_e \mathcal{R}_{\tau(\xi(t))} (\mathrm{d}^R \tau_{\xi(t)} \dot{\xi}(t))\,,
\end{equation*}
where $\mathrm{d}^L \tau, \mathrm{d}^R \tau : \mathfrak{g} \times \mathfrak{g} \to \mathfrak{g}$ are the left and right trivialized tangents to the exponential respectively \cite{IserlesLie,CeMaOw,BouRabeeMarsden}, which can be interpreted as translation operations on the algebra induced by another element of the algebra. Comparing this with our computations in previous sections, we can identify $\mathrm{H} := \mathrm{d}^L \tau_{\xi} \dot{\xi}$ and $\eta := \mathrm{d}^R \tau_{\xi} \dot{\xi}$.\\

A suitable discretization of $\dot{g}$ at the stage level is
\begin{equation*}
\dot{G}_k^i = T_e \mathcal{L}_{g_k \tau(\Xi_k^i)} (\mathrm{d}^L \tau_{\Xi_k^i} \dot{\Xi}_k^i)\,, \quad \text{or} \quad \dot{G}_k^i = T_e \mathcal{R}_{\tau(\Xi_k^i) g_k} (\mathrm{d}^R \tau_{\Xi_k^i} \dot{\Xi}_k^i)\,.
\end{equation*}

One can then obtain a discretization of $\dot{x}$ as
\begin{equation*}
\dot{X}_k^i = D_1 \sigma_{(G_k^i,x_0)} \left( ( \mathcal{L}_{G_k^i} )_* (\mathrm{d}^L \tau_{\Xi_k^i} \dot{\Xi}_k^i) \right)\,, \quad \text{or} \quad \dot{X}_k^i = D_1 \sigma_{(G_k^i,x_0)} \left( ( \mathcal{R}_{G_k^i} )_* (\mathrm{d}^R \tau_{\Xi_k^i} \dot{\Xi}_k^i) \right)\,,
\end{equation*}
where $G_k^i$ and the pushforward notation have been used to shorten these expressions.\\

Assume now a given RK method, defined by the coefficients $(a_{i j}, b_j, c_i)$, $i,j = 1,...,s$ that form its Butcher tableaux. See for instance \cite{HaLuWa}. The derivation of a variationally partitioned RKMK integrator is based on the discretization of the action using a quadrature rule
\begin{equation*}
\mathcal{J}_d[x_d] = \sum_{k = 0}^{N-1} h \sum_{i = 1}^s b_i L(X_k^i, \dot{X}_k^i)\,,
\end{equation*}
or in terms of $L^G$,
\begin{equation*}
\mathcal{J}_d^G[g_d] = \sum_{k = 0}^{N-1} h \sum_{i = 1}^s b_i (L \circ T\pi)(G_k^i, \dot{G}_k^i) = \sum_{k = 0}^{N-1} h \sum_{i = 1}^s b_i L^G (G_k^i, \dot{G}_k^i)\,.
\end{equation*}
This action must be supplemented with the RKMK constraints
\begin{subequations}
\label{eq:RKMK_constraints}
\begin{align}
\tau^{-1}(g_k^{-1} G_{k}^{i}) &= \Xi_{k}^i = \sum_{j = 1}^s a_{i j} \dot{\Xi}_{k}^{j}\,,\\
\tau^{-1}(g_k^{-1} g_{k+1}) &= \xi_{k} = \sum_{j = 1}^s b_{j} \dot{\Xi}_{k}^{j}\,,
\end{align}
\end{subequations}
as well as the nonholonomic constraints to fully define the problem.

\subsection{Discrete holonomically constrained Hamilton-Pontryagin action}
\label{ssec:discrete_hamilton_pontryagin}
In this section we will focus on a left-trivialized problem. Similar expressions can be obtained for the right-trivialized case.\\

We are going to derive the purely variational equations of a discrete Lagrangian problem on a Lie group. In order to simplify the transition between the variational and the nonholonomic case in the following section, we are going to consider a holonomically constrained system. This means that we will assume a Lagrangian $L^G: TG \to \mathbb{R}$ and a constraint submanifold $N \subset G$. This submanifold can be handled by the inclusion of a constraint function $\Phi^{\mathrm{hol}}: G \to V$, $V \cong \mathbb{R}^{\mathrm{codim}_G N}$, such that $\Phi^{\mathrm{hol}}(N) = 0$.\\

The discrete variational equations of the holonomically constrained variational problem are those of a variationally partitioned RKMK method \cite{BouRabeeMarsden,BogfjellmoMarthinsen,NonholonomicMartinSato18}. These can be obtained by choosing a discrete space of curves
\begin{align*}
&C_d^s (I_d, g_a, g_b)\\
&= \left\lbrace \left(g,\kappa,\left\lbrace \Xi^i, \dot{\Xi}^i, \mathrm{K}^i, \Lambda^i \right\rbrace_{i = 1}^s\right) \, : \, I_d \rightarrow G \times \mathfrak{g}^* \times \left(\mathbb{T}\mathfrak{g} \times V \right)^s \,|\, g(t_0) = g_a, g(t_N) = g_b\right\rbrace\,,
\end{align*}
with $\mathbb{T}\mathfrak{g} \cong \mathfrak{g} \times \mathfrak{g} \times \mathfrak{g}^*$, and a discrete Hamilton-Pontryagin functional on the group:
\begin{align*}
(\mathcal{J_{HP}}^G)_d &= \sum_{k = 0}^{N-1} \sum_{i = 1}^s h b_i \left[ \vphantom{\sum_{j = 1}^s} L^G\left(g_k \tau(\Xi_k^i), \left( \mathcal{L}_{g_k \tau(\Xi_k^i)} \right)_* \mathrm{d}^L \tau_{\Xi_k^i} \dot{\Xi}_k^i\right) + \left\langle\!\!\left\langle \Lambda^i_k, \Phi^{\mathrm{hol}}(g_k \tau(\Xi_k^i))\right\rangle\!\!\right\rangle \right.\\
&+ \left. \left\langle \mathrm{K}_k^i, \frac{1}{h}\Xi_k^i - \sum_{j = 1}^s a_{i j} \dot{\Xi}_k^j\right\rangle + \left\langle \kappa_{k+1}, \frac{1}{h} \tau^{-1}( g_k^{-1} g_{k+1}) - \sum_{j = 1}^s b_{i j} \dot{\Xi}_k^j\right\rangle\right]\,,
\end{align*}
where $\left\langle\!\!\!\left\langle \cdot, \cdot \right\rangle\!\!\!\right\rangle: V \times V \to \mathbb{R}$ is the Euclidean inner product in $V$.


Before proceeding to vary the action, let us introduce some maps that will appear in these computations:
\begin{itemize}
\item The second left-trivialized tangent to the exponential, $\mathrm{dd}^L \tau : \mathfrak{g} \times \mathfrak{g} \times \mathfrak{g} \to \mathfrak{g}$, defined by $\zeta \partial_{\xi} \left( \mathrm{d}^L \tau_\xi \eta \right) = \mathrm{dd}^L \tau_{\xi}(\eta,\zeta)$.
\item The derivative of the left translation operator defined by $\frac{\mathrm{d}}{\mathrm{d}t}\left.\left( \mathcal{L}_{g(t)} \eta \right)\right\vert_{\dot{g} = \mathcal{L}_{g}\zeta} = D \mathcal{L}_g (\eta,\zeta)$, $\eta,\zeta \in \mathfrak{g}$.
\item A second derivative of the Lie group action $\frac{\mathrm{d}}{\mathrm{d}t}\left.\left( D_1 \sigma_{(g(t),x_0)} v \right)\right\vert_{\dot{g} = w} = D_{1 1} \sigma_{(g,x_0)} (v,w)$, $v, w \in T_g G$.
\end{itemize}

With these we can define:
\begin{itemize}
\item $T \mathrm{d}^L \tau : T T \mathfrak{g} \to \mathfrak{g}^4$, $T \mathrm{d}^L \tau (\xi,\dot{\xi}, V_\xi, V_{\dot{\xi}}) = (\xi, \mathrm{d}^L \tau_{\xi} \dot{\xi}, \mathrm{d}^L \tau_{\xi} V_{\xi}, \mathrm{d}^L \tau_{\xi} V_{\dot{\xi}} + \mathrm{dd}^L \tau_{\xi}(\dot{\xi},V_{\xi}) ) = (\xi, \eta, V_0, V_{\eta})$. We interpret $\eta \in T_{0}\mathfrak{g}$ and $(V_0, V_{\eta}) \in T_{(0, \eta)} T \mathfrak{g}$.
\item $T \mathcal{L}: G \times \mathfrak{g}^3 \mapsto TTG$, $T \mathcal{L}(g,\zeta,\xi,\dot{\zeta}) = (g, \mathcal{L}_g \zeta, \mathcal{L}_g \xi, \mathcal{L}_g \dot{\zeta} + D \mathcal{L}_{g}(\zeta,\xi)) = (g,g',\dot{g},\dot{g}')$.
\item $\widetilde{\sigma}: TTG \times M \mapsto TTM$, $\widetilde{\sigma}(g,\dot{g},g',\dot{g}',x_0) = (\sigma(g,x_0), D_1 \sigma_{(g,x_0)}(\dot{g}), D_1 \sigma_{(g,x_0)}(g'), D_1 \sigma_{(g,x_0)}(\dot{g}') +$\\
$D_{1 1} \sigma_{(g,x_0)}(\dot{g},g')) = (x,\dot{x},x',\dot{x}')$.
\end{itemize}

Obviously, the variation with respect to the Lagrange multipliers $\kappa$ and $\mathrm{K}^i$ trivially gives us the aforementioned RKMK constraints (Eq.~\eqref{eq:RKMK_constraints}) by construction. Similarly, variation with respect to the Lagrange multipliers $\Lambda^i$ gives the constraints. Defining the trivialized group variations as $\zeta := \left( \mathcal{L}_{g^{-1}} \right)_* \delta g$ and using the short-hand $\left( D_j L^G \right)_k^i := D_j L^G\left(g_k \tau(\Xi_k^i), \left( \mathcal{L}_{g_k \tau(\Xi_k^i)} \right)_* \mathrm{d}^L \tau_{\Xi_k^i} \dot{\Xi}_k^i\right)$ and analogously for $D(\Phi^{\mathrm{hol}})_k^i$, the non-trivial variation terms are:
\begin{align*}
\zeta_k &: \sum_{k = 0}^{N-1} h \sum_{i = 1}^s b_i \left[ \left\langle \left(\mathrm{Ad}_{-\tau(\Xi_k^i)}\right)^* \mathcal{L}_{g_k \tau(\Xi_k^i)}^* \left[ \left( D_1 L^G \right)_k^i + \left\langle\!\!\left\langle \Lambda^i_k, D(\Phi^{\mathrm{hol}})^i_k \right\rangle\!\!\right\rangle \right] \right.\right.\\
&+ \left(\mathrm{Ad}_{\tau(-\Xi_k^i)}\right)^* \left[ \left(D\mathcal{L}_{g_k \tau(\Xi_k^i)}\right)_* \left( \mathrm{d}^L \tau_{\Xi_k^i} \dot{\Xi}_k^i \right)\right]^* \left( D_2 L^G \right)_k^i\\
&- \left.\left. \frac{1}{h} \left(\mathrm{Ad}_{\tau\left(-\xi_{k}\right)}\right)^*\left(\mathrm{d}^{L}\tau_{\xi_{k}}^{-1}\right)^* \kappa_{k+1}, \zeta_{k} \vphantom{\left(\mathrm{Ad}_{\tau(-\Xi_k^i)}\right)^*} \right\rangle \right]\,.\\
\zeta_{k+1} &: \sum_{k = 0}^{N-1} h \sum_{i = 1}^s b_i \left[ \left\langle \frac{1}{h} \left(\mathrm{d}^{L}\tau_{\xi_{k}}^{-1}\right)^*\kappa_{k+1}, \zeta_{k+1} \right\rangle \right]\,.\\
\delta \Xi &: \sum_{k = 1}^{N-1} h \sum_{i = 1}^s b_i \left[ \vphantom{\frac{1}{h}} \left\langle \left( \mathrm{d}^L \tau_{\Xi_k^i} \right)^* \mathcal{L}_{g_k \tau(\Xi_k^i)}^* \left[ \left( D_1 L^G \right)_k^i + \left\langle\!\!\left\langle \Lambda^i_k, D(\Phi^{\mathrm{hol}})^i_k \right\rangle\!\!\right\rangle \right] \right.\right.\\
&+ \left( \mathrm{d}^L \tau_{\Xi_k^i} \right)^* \left[ \left(D\mathcal{L}_{g_k \tau(\Xi_k^i)}\right)_* \left( \mathrm{d}^L \tau_{\Xi_k^i} \dot{\Xi}_k^i \right) \right]^* \left( D_2 L^G \right)_k^i\\
&+ \left.\left. \left( \mathrm{dd}^L \tau_{\Xi_k^i} \dot{\Xi}_k^i \right)^* \mathcal{L}_{g_k \tau(\Xi_k^i)}^* \left( D_2 L^G \right)_k^i, \delta \Xi_k^i \right\rangle + \frac{1}{h} \left\langle  \mathrm{K}_k^i, \delta \Xi_k^i \right\rangle \right]\,.\\
\delta \dot{\Xi} &: \sum_{k = 1}^{N-1} h \sum_{i = 1}^s b_i \left[ \vphantom{\sum_{j = 1}^s} \left\langle \left( \mathrm{d}^L \tau_{\Xi_k^i} \right)^* \mathcal{L}_{g_k \tau(\Xi_k^i)}^* \left( D_2 L^G \right)_k^i, \delta \dot{\Xi}_k^i \right\rangle - \left\langle \mathrm{K}_k^i, \sum_{j = 1}^s a_{i j} \delta \dot{\Xi}_k^j \right\rangle - \left\langle \kappa_{k+1}, \sum_{j = 1}^s b_{j} \delta \dot{\Xi}_k^j \right\rangle \right]\,.
\end{align*}

The first two variations can be rearranged into one single term running through $k = 1,...,N-1$ plus a boundary term at $k = 0$ from the first and another from the latter at $k = N$. Identifying these boundary terms with those found in the continuous case we get that $\mu_k := \left(\mathrm{d}^{L}\tau^{-1}_{\xi_{k-1}}\right)^* \kappa_k = (\mathcal{L}_{g_k})^* D_2 L^G(g_k,\dot{g}_{k})$.\\

Using the following definitions
\begin{align*}
\mathrm{d}(L^G)^{i}_k &= \left(\left( D_1 L^G \right)_k^i , \left( D_2 L^G \right)_k^i\right) \in T^*_{(g_k \tau(\Xi_k^i), ( \mathcal{L}_{g_k \tau(\Xi_k^i)} )_* \mathrm{d}^L \tau_{\Xi_k^i} \dot{\Xi}_k^i)} TG\\
(F_{\Phi^{\mathrm{hol}}})^{i}_k &= \left( \left\langle\!\!\left\langle \Lambda^i_k, D(\Phi^{\mathrm{hol}})^i_k \right\rangle\!\!\right\rangle, 0 \right) \in T^*_{(g_k \tau(\Xi_k^i), ( \mathcal{L}_{g_k \tau(\Xi_k^i)} )_* \mathrm{d}^L \tau_{\Xi_k^i} \dot{\Xi}_k^i)} TG\\
( {\mathrm{N}_0}_k^i, {\Pi_0}_k^i ) &= T^* \mathcal{L} \left[ \mathrm{d}(L^G)^{i}_k + (F_{\Phi^{\mathrm{hol}}})^{i}_k \right] \in T^*_{(0,\mathrm{d}^L \tau_{\Xi_k^i} \dot{\Xi}_k^i)} T \mathfrak{g}\\
( {\mathrm{N}}_k^i, {\Pi}_k^i ) &= T^* \mathrm{d}^L \tau ( {\mathrm{N}_0}_k^i, {\Pi_0}_k^i ) \in T^*_{(\Xi_k^i, \dot{\Xi}_k^i)} T \mathfrak{g}\\
\widehat{X}_k &= \left(\mathrm{d}^{L}\tau^{-1}_{\xi_{k}}\right)^* X_k\,,
\end{align*}
we finally get the discrete equations that conform our variationally partitioned RKMK method \footnote{Python libraries containing all the necessary operators for the Lie groups $SO(3)$ and $SE(3)$ by this author can be found at \url{https://github.com/RodriTaku/LieGroupsPython}.}:
\begin{subequations}
\label{eq:VPRKMK}
\begin{align}
\Xi_k^i &= \tau^{-1}\left(g_k^{-1} G_k^i\right) = h \sum_{j = 1}^s a_{i j} \dot{\Xi}_k^j,\\
\xi_{k} &= \tau^{-1}\left(g_k^{-1} g_{k+1}\right) = h \sum_{j = 1}^s b_{j} \dot{\Xi}_k^j,\\
\widehat{\Pi}_k^i &= \mathrm{Ad}_{\tau(\xi_{k})}^* \left[\mu_k + h \sum_{j = 1}^s b_j \left( \mathrm{Ad}_{\tau(-\Xi_{k}^j)}^* {\mathrm{N}_0}_{k}^j - \frac{a_{j i}}{b_i} \mathrm{Ad}_{\tau(-\xi_{k})}^* \left.\widehat{\mathrm{N}}\right._{k}^j \right) \right],\\
\mu_{k+1} &= \mathrm{Ad}_{\tau(\xi_{k})}^* \left[ \mu_{k} + h \sum_{j = 1}^s b_j \mathrm{Ad}_{\tau(-\Xi^j_k)}^* {\mathrm{N}_0}_k^j\right],\label{eq:VPRKMK_final_momentum}\\
0 &= \Phi^{\mathrm{hol}}(G^i_k)\,.\label{eq:VPRKMK_hol_constr}
\end{align}
\end{subequations}

Using matrices, we can write in the above expressions
\begin{align*}
T^*\mathcal{L}
&= \left[\begin{array}{cc}
\mathcal{L}_{g_k \tau(\Xi_k^i)}^* & \left[ \left(D\mathcal{L}_{g_k \tau(\Xi_k^i)}\right)_* \left( \mathrm{d}^L \tau_{\Xi_k^i} \dot{\Xi}_k^i \right)\right]^* \\
0 & \mathcal{L}_{g_k \tau(\Xi_k^i)}^*
\end{array}
\right]\,,\\
T^*\mathrm{d}^L \tau
&= \left[\begin{array}{cc}
\left( \mathrm{d}^{L}\tau_{\Xi_k^i} \right)^* & \left( \mathrm{dd}^{L}\tau_{\Xi_k^i} \dot{\Xi}_k^i \right)^* \\
0 & \left( \mathrm{d}^{L}\tau_{\Xi_k^i} \right)^*
\end{array}
\right]\,.
\end{align*}

Following \cite{MarsdenWest}, Sec. 3.5.6, to warrant both solvability and that not only the stage variables but also the step variables satisfy the constraints we require that the RK method be stiffly accurate, i.e. $a_{s j} = b_j$, and $a_{1 j} = 0$, which are methods of Lobatto-type \cite{SatoMartindeAlmagro2021}. This means that the first and last stage variables coincide with the step variables. However, this renders Eq.~\eqref{eq:VPRKMK_hol_constr} redundant for $i = 1$, since the initial stage variable satisfies the constraint.\\

Since we need another equation to have a locally unique solution, we may add a further constraint. The natural choice, since consistent initial-step values must satisfy this, is the tangency constraint at the final step value \cite{Jay96,MarsdenWest}
\begin{equation*}
\left\langle D \Phi^{\mathrm{hol}}(g_{k+1}), \dot{g}_{k+1} \right\rangle = 0
\end{equation*}
where the velocity $\dot{g}_{k+1}$ is implicitly defined by $\mu_{k+1} = (\mathcal{L}_{g_{k+1}})^* D_2 L^G(g_{k+1},\dot{g}_{k+1})$.\\

\begin{remark}
If a trivialized Lagrangian $\ell^G$ is used, we can write
\begin{align*}
\mathrm{d}(\ell^G)^{i}_k &= \left(\left( D_1 \ell^G \right)_k^i , \left( D_2 \ell^G \right)_k^i\right) \in T^*_{(g_k \tau(\Xi_k^i), \mathrm{d}^L \tau_{\Xi_k^i} \dot{\Xi}_k^i)} (G \times \mathfrak{g})\\
(f_{\Phi})^{i}_k &= \left( \left\langle\!\!\left\langle \Lambda^i_k, D(\Phi^{\mathrm{hol}})^i_k \right\rangle\!\!\right\rangle, 0 \right) \in T^*_{(g_k \tau(\Xi_k^i), \mathrm{d}^L \tau_{\Xi_k^i} \dot{\Xi}_k^i)} (G \times \mathfrak{g})\,,\\
( {\mathrm{N}_0}_k^i, {\Pi_0}_k^i ) &= \widetilde{\mathcal{L}}^* \left[ \mathrm{d}(\ell^G)^{i}_k + (f_{\Phi^{\mathrm{hol}}})^{i}_k \right],
\end{align*}
with
\begin{equation*}
\widetilde{\mathcal{L}}^*
= \left[\begin{array}{cc}
\mathcal{L}_{g_k \tau(\Xi_k^i)}^* & 0 \\
0 & I
\end{array}
\right]\,.
\end{equation*}

If the original Lagrangian $L$ is used, then
\begin{equation*}
\mathrm{d}(L^G)^{i}_k = \tilde{\sigma}^*_{(x_0)} \mathrm{d}(L)^{i}_k \,,
\end{equation*}
with
\begin{equation*}
\tilde{\sigma}^*_{x_0}
= \left[\begin{array}{cc}
D_1 \sigma_{(g_k \tau(\Xi_k^i),x_0)}^*  & \left[ D_{1 1} \sigma_{(g_k \tau(\Xi_k^i),x_0)} \left( \left( \mathcal{L}_{g_k \tau(\Xi_k^i)} \right)_* \mathrm{d}^L \tau_{\Xi_k^i} \dot{\Xi}_k^i \right)\right]^* \\
0 & D_1 \sigma_{(g_k \tau(\Xi_k^i),x_0)}^*
\end{array}
\right]\,.
\end{equation*}
\end{remark}

\subsection{Nonholonomic extension}
As stated at the beginning of Sec.~\ref{ssec:discrete_hamilton_pontryagin}, the former equations are only valid for purely variational problems. Moreover, as mentioned in Sec.~\ref{sec:variational_problems}, the Lagrangians $L^G$ and $\ell^G$ will generally be singular. We will focus on the trivialized case, but the non-trivialized case is entirely analogous.\\

We need to include the nonholonomic constraints imposed by the distribution in the discrete setting. However, contrary to what happens in the continuous case, the resulting discrete nonholonomic equations are not well-defined unless a regular Lagrangian in $G$ is provided. This is unfortunate, but not too difficult to overcome. Any regular Lagrangian $\ell^{\mathrm{reg}}: G \times \mathfrak{g} \to \mathbb{R}$ satisfying $\left.\ell^{\mathrm{reg}}\right\vert_{\mathcal{D}} = \ell^{G}$ will give the correct dynamics.\\

Once a suitable $\ell^{\mathrm{reg}}$ is chosen in place of $\mathcal{l}^G$, we proceed to modify the algorithm to include the nonholonomic constraints. First, the nonholonomic constraint force to be used in place of the holonomic one is
\begin{equation*}
(f_{\Phi})^{i}_k = \left( \left\langle\!\!\left\langle \Lambda^i_k, D_2\Phi\left( g_k \tau(\Xi_k^i), \left( \mathcal{L}_{g_k \tau(\Xi_k^i)} \right)_* \mathrm{d}^L \tau_{\Xi_k^i} \dot{\Xi}_k^i \right) \right\rangle\!\!\right\rangle, 0 \right) \in T^*_{(g_k \tau(\Xi_k^i), \mathrm{d}^L \tau_{\Xi_k^i} \dot{\Xi}_k^i)} (G \times \mathfrak{g})\,,
\end{equation*}
However, since the reduced constraint $\phi: \mathfrak{g} \to \mathbb{R}$ is at our disposal, we can use it
\begin{equation*}
(f_{\phi})^{i}_k = \left( \left\langle\!\!\left\langle \Lambda^i_k, D\phi\left( \mathrm{d}^L \tau_{\Xi_k^i} \dot{\Xi}_k^i \right) \right\rangle\!\!\right\rangle, 0 \right)\,,
\end{equation*}
in which case no transport by $\widetilde{\mathcal{L}}^*$ needs to be applied, i.e. 
\begin{equation*}
( {\mathrm{N}_0}_k^i, {\Pi_0}_k^i ) = \widetilde{\mathcal{L}}^* \mathrm{d}(\ell^G)^{i}_k + (f_{\phi})^{i}_k\,.
\end{equation*}

Second, we need to impose the nonholonomic constraint itself, Eq.~\eqref{eq:VPRKMK_hol_constr}. For this we need to substitute the holonomic constraint by the nonholonomic one. However, this poses the important question of where should one impose this constraint.\\

Nonholonomic constraints do not only depend on the configuration variables but also on the fibres, be it velocities or momenta. The discrete velocities $\mathrm{d}^L \tau_{\Xi_k^i} \dot{\Xi}^i_k$ are not as good approximations of the velocities $\eta(t_k + c_i h)$ as $G^i_k = g_k \tau(\Xi^i_k)$ are of the configuration variables $g(t_k + c_i h)$. Similarly $\Pi^i_k$ are not as good approximations of the momenta $\mu(t_k + c_i h) = D_2 \ell^G(g(t_k + c_i h), \eta(t_k + c_i h))$.\\

We want to impose the constraints closer to the actual constraint manifold. For this, as proposed in \cite{SatoMartindeAlmagro2021,NonholonomicMartinSato18}, we introduce new velocity approximations $\mathrm{H}^i_k$ and momenta $\mathrm{M}^i_k = D_2 \ell^G(G^i_k, \mathrm{H}^i_k)$, with which we can impose the constraint:
\begin{equation}
\label{eq:VPRKMK_nonhol_constr}
\phi(\mathrm{H}^i_k) = 0\,, \quad i = 1,...,s
\end{equation}

Naturally, Eqs.~(\ref{eq:VPRKMK}a-d) and \eqref{eq:VPRKMK_nonhol_constr} need to be supplied with new equations to fully define $\mathrm{H}^i_k$. For this, we have
\begin{equation}
\label{eq:VPRKMK_momenta}
{\mathrm{M}_0}^i_k = \mathrm{Ad}_{\tau(\Xi^i_k)}^* \left[ \mu_{k} + h \sum_{j = 1}^s a_{i j} \mathrm{Ad}_{\tau(-\Xi^j_k)}^* {\mathrm{N}_0}_k^j\right]\,, \quad i = 1,...,s.
\end{equation}
Notice that, due to the properties of the RK coefficients, for $i = 1$ we have that ${\mathrm{M}_0}^1_k = \mu_k$ and for $i = s$ we recuperate Eq.~\eqref{eq:VPRKMK_final_momentum}. Thus this set of equations can be regarded as an extension of Eq.~\eqref{eq:VPRKMK_final_momentum} to inner stages. Actually, if we were to consider the free case, i.e. the ${\mathrm{N}_0}_k^i$ do not contain constraint forces, this equation could be applied to a symplectic integrator as a post-processing layer, providing us with approximations of the inner momenta and velocities of the same order as the $G^i_k$.\\

Finally, a crucial issue must be addressed. Similar to what happens in the holonomic case, Eq.~\eqref{eq:VPRKMK_nonhol_constr} for $i = 1$ is redundant since we assume that each initial-step value satisfies the constraint. However, in the nonholonomic case we do not have a natural choice of additional constraint to impose. In this regard, it can be argued that this algorithm is incomplete. This issue is intimately related with the results of \cite{PerlmutterMcLachlan}, Sec.7.\\

A tangency constraint in this case would either impose values on the final Lagrange multipliers $\Lambda^s_k$, thus breaking the symmetry of the method, or add new multipliers to be related to the rest through additional equations that would need to be specified. The first possibility is similar to another strategy where $\Lambda^1_k$, instead of $\Lambda^s_k$, for each $k$ are assumed to be derived from the expression for the multipliers obtained in the continuous setting. This has been explored and its performance varies from problem to problem, some times providing excellent results while some others displaying disappointing long-term energy behaviour. Another possibility would be instead to impose some other kind of constraint, perhaps related to the dynamics of the multipliers.\\

Instead of adding further equations, we commonly assume that the last multipliers of the current step are inserted as the firsts of the next, i.e. $\Lambda^1_{k+1} = \Lambda^s_k$. We call this $\Lambda$-\emph{concatenation}. In this case we also assume that $\Lambda^1_0$ coincides with its continuous counterpart which can be readily computed from initial data by differentiation of the constraint. This seems to be a common choice for similar algorithms, e.g. \cite{Jay93}, and appears to give good results in terms of energy preservation in many situations. However, this is still an unacceptable state of affairs and finding the adequate equations is an active research topic of this author. This choice is also unfortunately linked to some stability issues of this algorithm, as we will see in the examples in the next section. Hopefully in the future these issues can be overcome by finding the right constraints to impose.\\

\section{Application to homogeneous spaces and numerical tests}\label{eq:numerical_tests}
As an example we consider the case of the sphere, $M = S^2$, and the special orthogonal group, $G = SO(3)$, which acts transitively on it. Since any rotation of the sphere leaves the pair of points lying on the intersection with the axis of rotation fixed, the isotropy group of any point is therefore an element of $SO(2)$. Thus $S^2 \cong SO(3)/SO(2)$.\\

$S^2$ can be embedded in $\mathbb{R}^3$, which will be particularly useful to write the expressions for the Lagrangian of these systems. Similarly, the action of the group, $\sigma: SO(3) \times S^2 \to S^2$, can be easily handled by using a matrix representation of the group. Thus, if $g \in SO(3) \subset M_{3}(\mathbb{R})$, we can simply write
\begin{equation*}
\sigma(g,x_0) = g x_0\,,
\end{equation*}
where on the right-hand side we have the multiplication of a $3 \times 3$ square matrix by a column matrix of dimension $3$. We may choose the origin $x_0$ at our own discretion. We will take $x_0 = (0,0,1)$, the north pole of the sphere.\\

The algebra of $SO(3)$, $\mathfrak{g} = \mathfrak{so}(3)$ can be thought of as the set of $3 \times 3$ skew-symmetric matrices. This latter space is isomorphic to $\mathbb{R}^3$ and therefore its elements can also be thought of as column vectors. In this regard, we have that the infinitesimal action at the identity becomes
\begin{equation*}
D_1 \sigma_{(e,x_0)}(\eta) = \eta \times x_0\,,
\end{equation*}
where $\times$ denotes the standard cross product in Euclidean space. We can readily see from this that $\mathfrak{h} = \left\lbrace \eta \in \mathfrak{so}(3) \, \vert \, \eta \propto x_0 \right\rbrace \cong \mathfrak{so}(2)$. Therefore, with our choice of $x_0$ we have that $\mathfrak{m} = \left\lbrace \eta \in \mathfrak{so}(3) \, \vert \, \eta \cdot x_0 = 0\right\rbrace$, where $\cdot$ denotes the standard Euclidean inner product. We should unpack here the fact that $B: \mathfrak{so}(3) \times \mathfrak{so}(3) \to \mathbb{R}$ has been identified with the Euclidean inner product and, given how $\mathfrak{h}$ is defined, $x_0$ is regarded here as one of its elements.\\

In what follows, we apply our integrator to two Lagrangian systems: the spherical pendulum and the Kepler problem on the sphere.

\subsection{Spherical mathematical pendulum}
The Lagrangian of this system is $L: T S^2 \subset \mathbb{R}^6 \to \mathbb{R}$, which we may write as
\begin{equation*}
L(x,\dot{x}) = \frac{m}{2} \left\Vert \dot{x} \right\Vert^2 + \gamma \cdot x,
\end{equation*}
where $\left\Vert \right\Vert$ denotes the norm associated to the inner product, $m$ is the mass of the bob of the pendulum and $\gamma \in \mathbb{R}^3$ is the gravitational field intensity.\\

The Lagrangian on $G$, $L^G: TSO(3) \to \mathbb{R}$, is
\begin{equation*}
L^G(g,\dot{g}) = (L \circ \pi)(g, \dot{g}) = L(\sigma(g,x_0),D_1 \sigma_{(g,x_0)}(\dot{g})) = \frac{m}{2} \left\Vert \dot{g} x_0 \right\Vert^2 + \gamma \cdot (g x_0).
\end{equation*}
Using Prop.~\ref{prp:action_translation_left} and taking into account the linearity of the action with respect to its second argument, i.e.
\begin{equation*}
D_2 \sigma_{(g,x_0)} (x_1) = g x_1\,, \quad \forall x_1 \in S^2\,,
\end{equation*}
we get that if $\dot{g} = (\mathcal{L}_{g})_* \eta$, then $\dot{g} x_0 = g(\eta \times x_0)$, and due to the fact that the elements of $SO(3)$ are the isometries of $\mathbb{R}^3$, the left-trivialized Lagrangian $\ell^G: SO(3) \times \mathfrak{so}(3) \to \mathbb{R}$ becomes
\begin{equation*}
\ell^G(g,\eta) = \frac{m}{2} \left\Vert \eta \times x_0 \right\Vert^2 + \gamma \cdot (g x_0)\,.
\end{equation*}

It is not difficult to check that this Lagrangian is singular since the components of $\eta$ parallel to $x_0$ are lost. A very simple regularization is
\begin{equation*}
\ell^{\mathrm{reg}}(g,\eta) = \frac{m}{2} \left\Vert \eta \times x_0 \right\Vert^2 + \frac{M}{2} (\eta \cdot x_0)^2 + \gamma \cdot (g x_0)\,, \quad M \neq 0.
\end{equation*}

Finally, the constraint function can be simply written as $\phi(\eta) = \eta \cdot x_0 = \eta_3$. Applying Eq.~(\ref{eq:Euler_Poincare_nonholon}a-b) since we are considering the left-trivialized case and choosing $\gamma = (0,0,-\alpha)$, the equations of motion are
\begin{align*}
m \dot{\eta}_1 &= (m - M) \eta_3 \eta_2 + \alpha \sin \theta_1 \cos \theta_2\,,\\
m \dot{\eta}_2 &= -(m - M) \eta_3 \eta_1 + \alpha \sin \theta_2\,,\\
M \dot{\eta}_3 &= \lambda\,,\\
        \eta_3 &=  0\,,
\end{align*}
where we have used Tait-Bryan angles $(\theta_1, \theta_2, \theta_3)$ as coordinates in $SO(3)$, with matrices representing a given rotation $R_z(\theta_3) R_y(\theta_2) R_x(\theta_1)$, with
\begin{equation*}
R_x(\theta) = \left(
\begin{array}{ccc}
1 &           0 &            0\\
0 & \cos \theta & -\sin \theta\\
0 & \sin \theta &  \cos \theta
\end{array}
\right)\,,
\quad
R_y(\theta) = \left(
\begin{array}{ccc}
 \cos \theta & 0 & \sin \theta\\
           0 & 1 &           0\\
-\sin \theta & 0 & \cos \theta
\end{array}
\right)\,,
\quad
R_z(\theta) = \left(
\begin{array}{ccc}
\cos \theta & -\sin \theta & 0\\
\sin \theta &  \cos \theta & 0\\
          0 &            0 & 1
\end{array}
\right)\,,
\end{equation*}
and the fact that $\mathrm{ad}_{\xi}^* \mu = \mu \times \xi$. Differentiating the constraint and substituting in the third equation we get that $\lambda = 0$.\\

Before proceeding to apply a nonholonomic method, it should be mentioned that given the simplicity of the system, the conservation of the parallel component of the angular momentum to $\gamma$ would suffice to warrant that we do not leave the constraint manifold. Indeed, application of a symplectic integrator, which automatically preserves these symmetries, would provide excellent results. In spite of this, we will apply a nonholonomic method and we may use this to highlight some of the differences.\\

In Figs.~\ref{fig:pendulum}, \ref{fig:pendulum_energy_evolution} and \ref{fig:pendulum_lambda_evolution} we offer several plots of numerical solutions of the problem for integrators of order 2, 4 and 6 corresponding to the 2, 3 and 4-stage Lobatto methods using the Cayley map as retraction $\tau$. The order of the methods was proven in \cite{SatoMartindeAlmagro2021} in vector spaces, but the result holds in the Lie group setting since this is a geometrically consistent modification. The initial conditions have been chosen as $g_0 = (0, \pi/3, 0)$, $\eta_0 = (1/3,0,0)$ and $\lambda_0 = \Lambda^1_0 = 0$.\\

\begin{figure}[]
	\centering
	\begin{minipage}{0.55\textwidth}
		\centering
		\includegraphics[width=\textwidth]{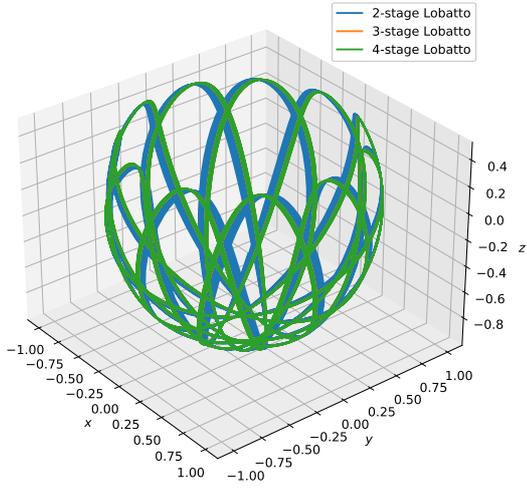}
	\end{minipage}\hfill
	\begin{minipage}{0.4\textwidth}
		\caption{Trajectories of the spherical pendulum with $h = 0.1$ computed with $\tau = \mathrm{cay}$.}\label{fig:pendulum}
	\end{minipage}
\end{figure}

\begin{figure}[]
	\centering
	\begin{subfigure}[b]{0.32\textwidth}
		\centering
		\includegraphics[width=\textwidth]{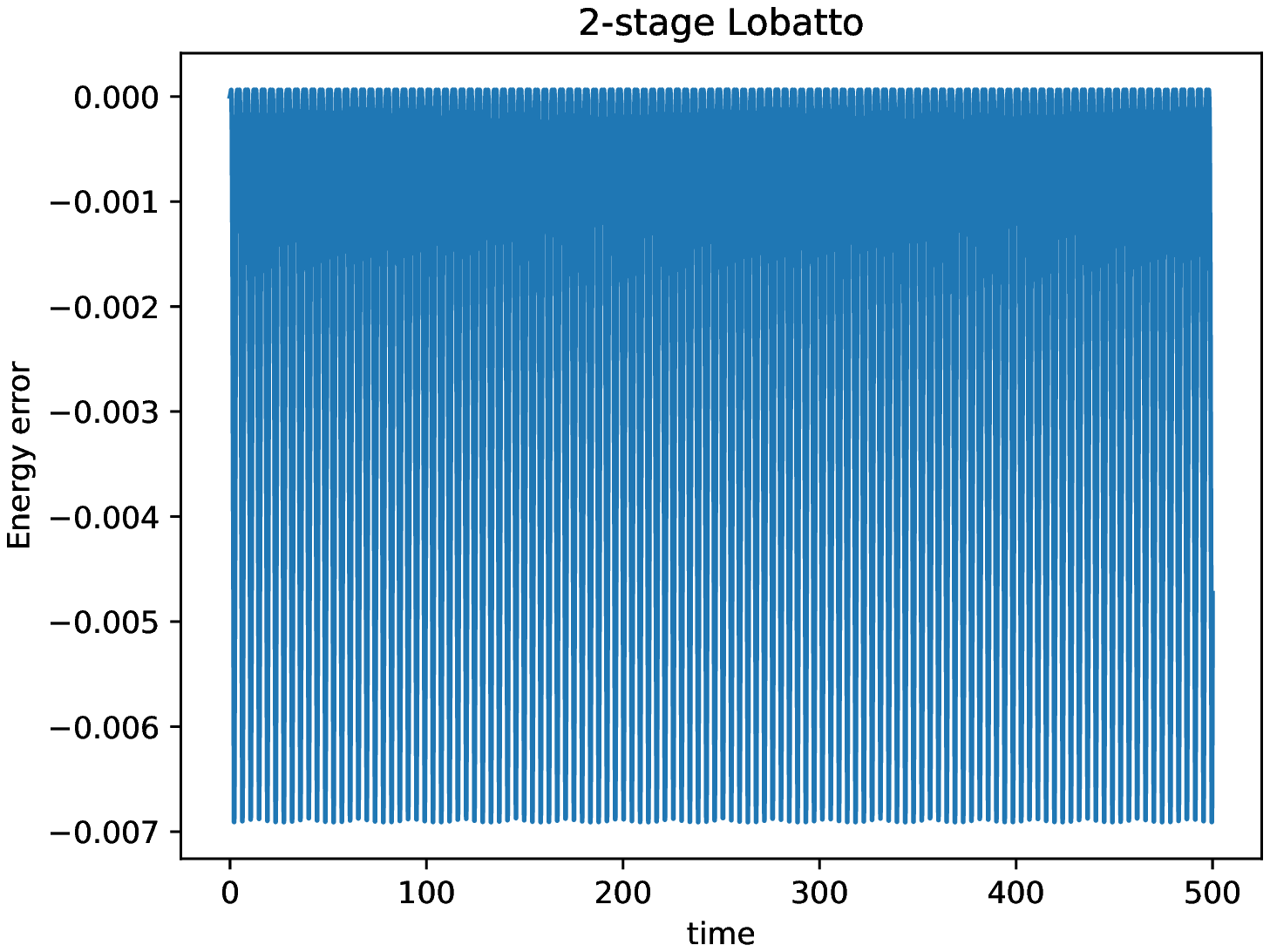}
		\caption{}\label{fig:pendulum_energy_evolution_lobatto2}
	\end{subfigure}
	\begin{subfigure}[b]{0.32\textwidth}
		\centering
		\includegraphics[width=\textwidth]{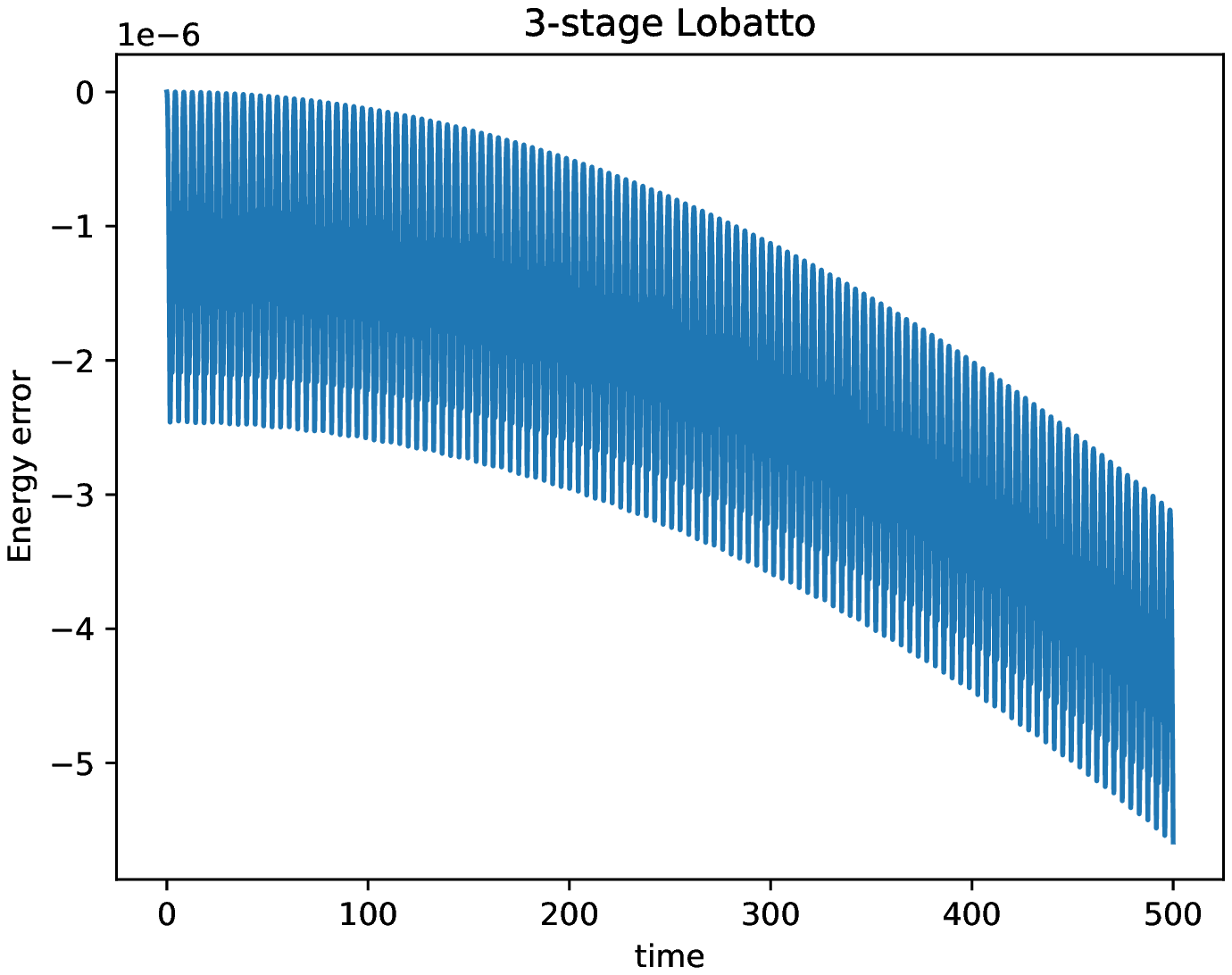}
		\caption{}\label{fig:pendulum_energy_evolution_lobatto3}
	\end{subfigure}
	\begin{subfigure}[b]{0.32\textwidth}
		\centering
		\includegraphics[width=\textwidth]{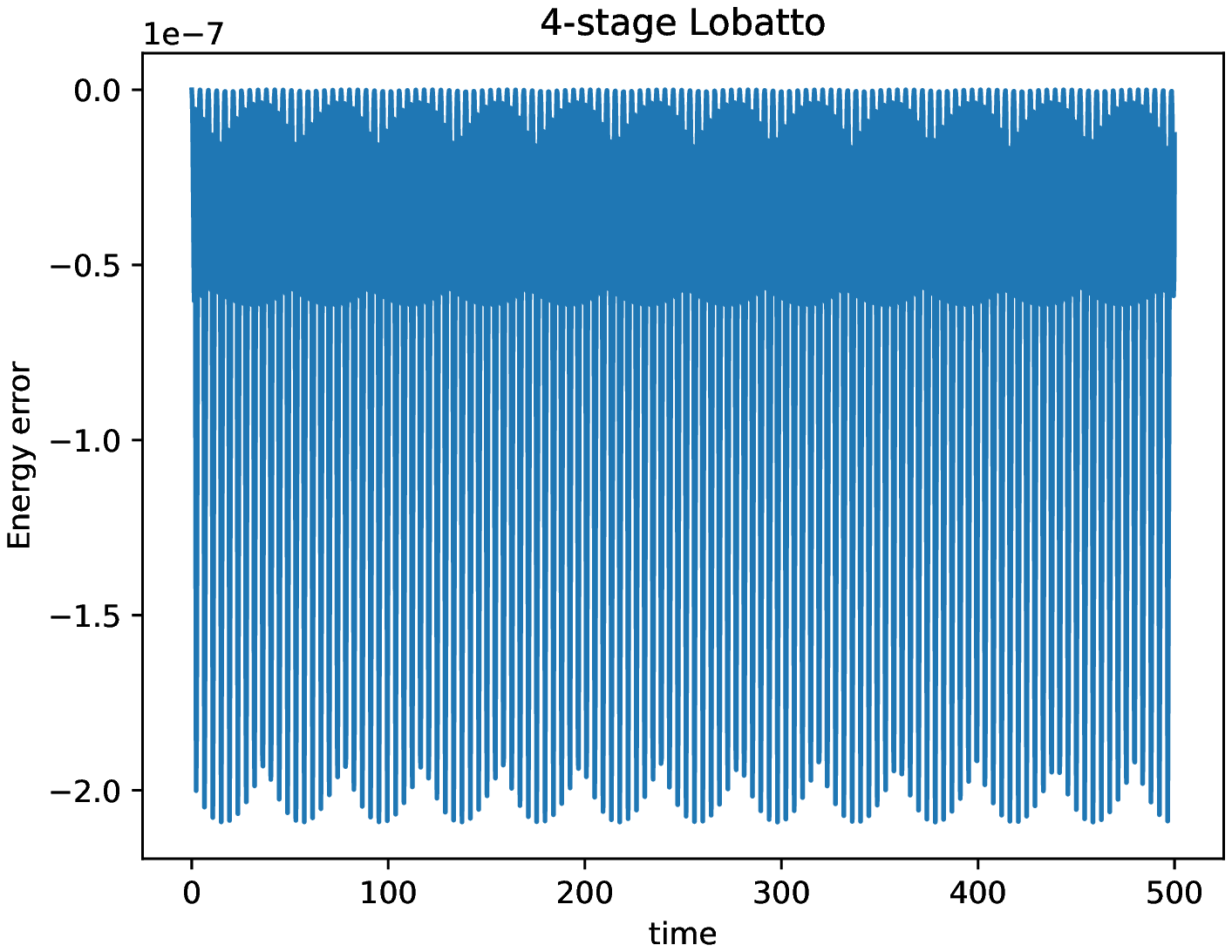}
		\caption{}\label{fig:pendulum_energy_evolution_lobatto4}
	\end{subfigure}
	  \caption{Evolution of the energy error, $E(t)-E(0)$, of the pendulum.}\label{fig:pendulum_energy_evolution}
\end{figure}

For the 3-stage Lobatto method with $\Lambda$-concatenation the system displays some of the instability mentioned before. This issue remains to be rigorously analysed, but a very plausible explanation is as follows. The fact that we have chosen to chain Lagrange multipliers from one step to the next instead of adding further equations to fix them inside each step means on the one hand that we have effectively lost some degrees of freedom, and on the other that the dynamics of the rest of the system is left to cope with whatever the earlier steps have done. All of this means that the system is particularly vulnerable to drift induced by these multipliers.\\

\begin{figure}[]
	\centering
	\begin{subfigure}[b]{0.32\textwidth}
		\centering
		\includegraphics[width=\textwidth]{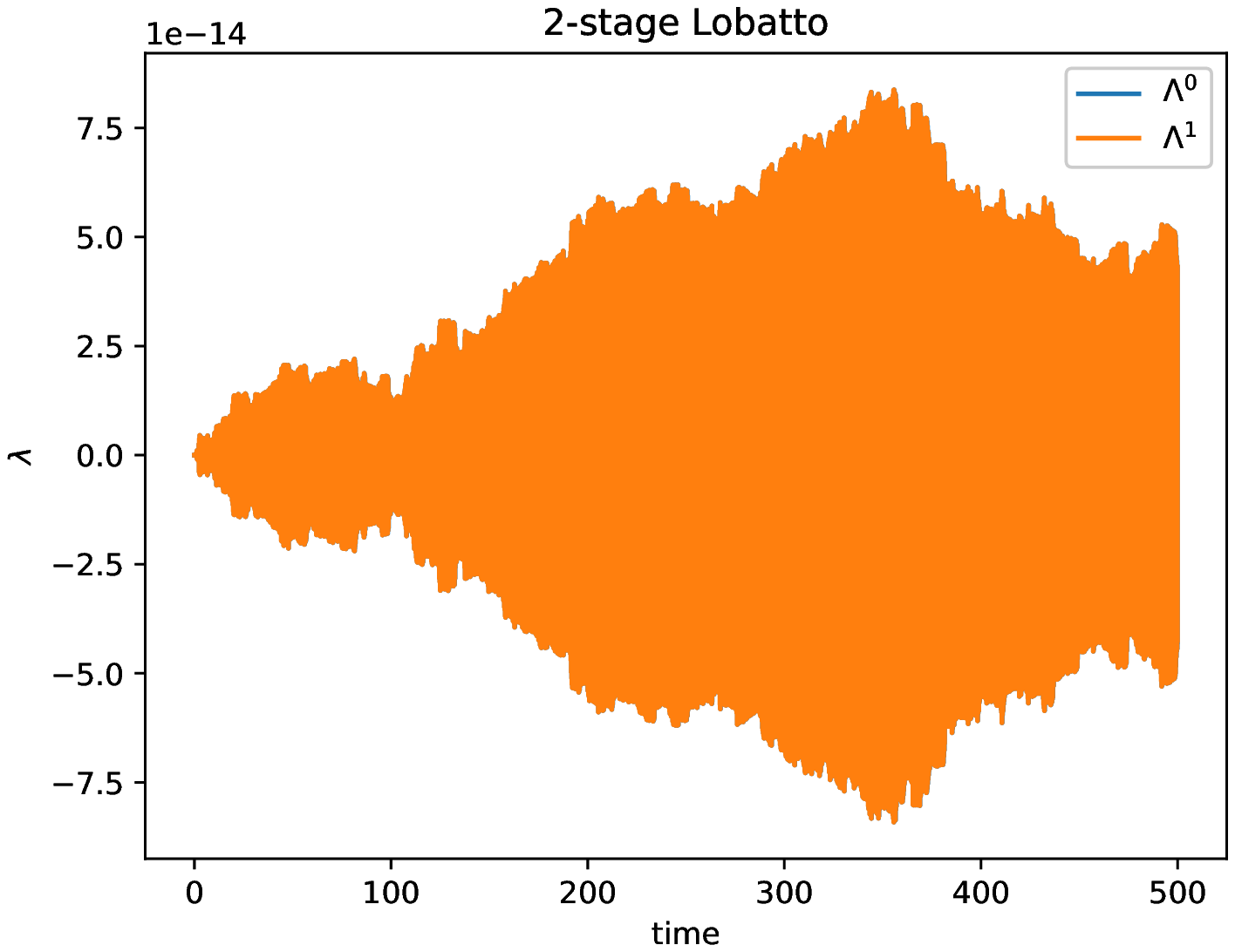}
		\caption{}\label{fig:pendulum_lambda_evolution_lobatto2}
	\end{subfigure}
	\begin{subfigure}[b]{0.32\textwidth}
		\centering
		\includegraphics[width=\textwidth]{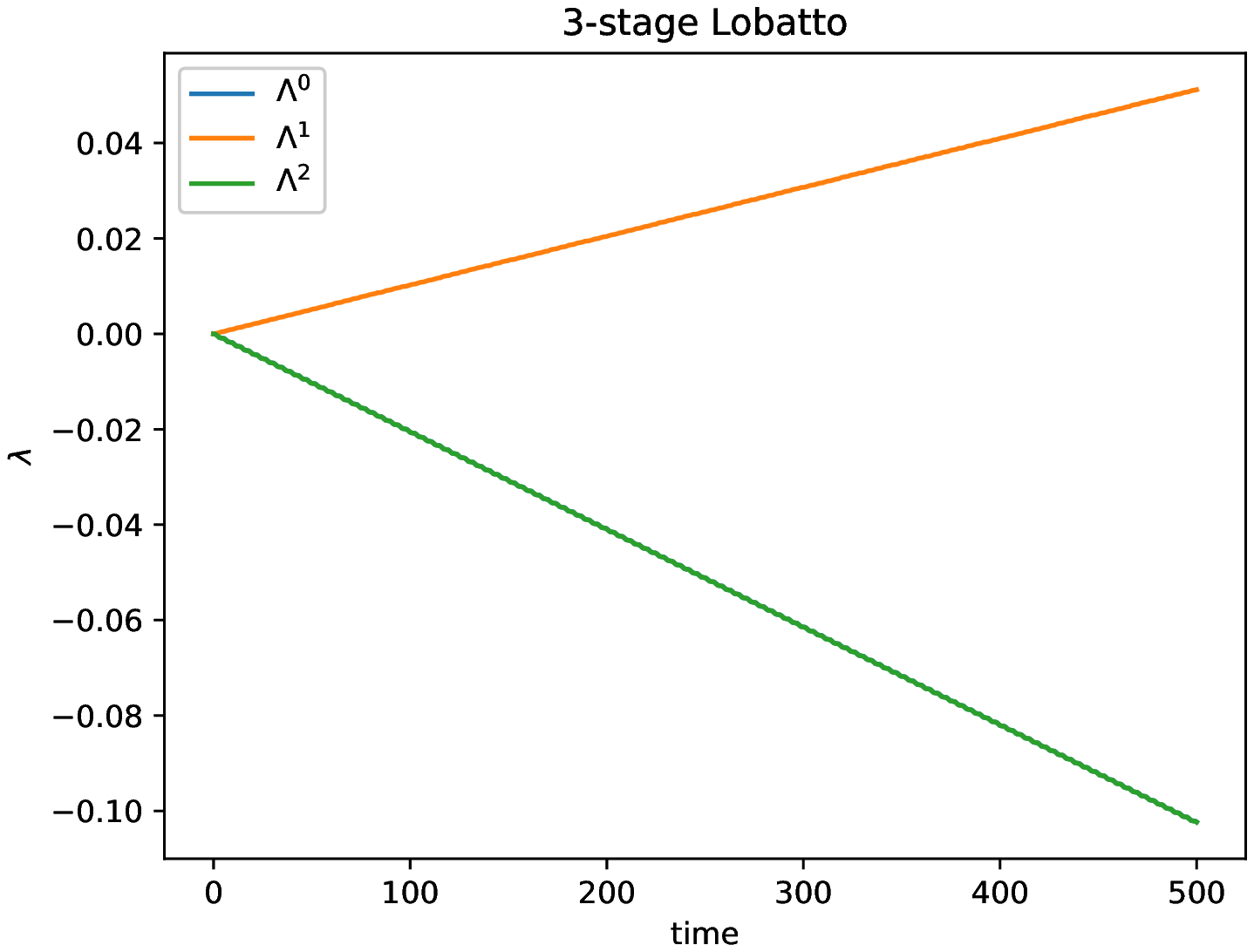}
		\caption{}\label{fig:pendulum_lambda_evolution_lobatto3}
	\end{subfigure}
	\begin{subfigure}[b]{0.32\textwidth}
		\centering
		\includegraphics[width=\textwidth]{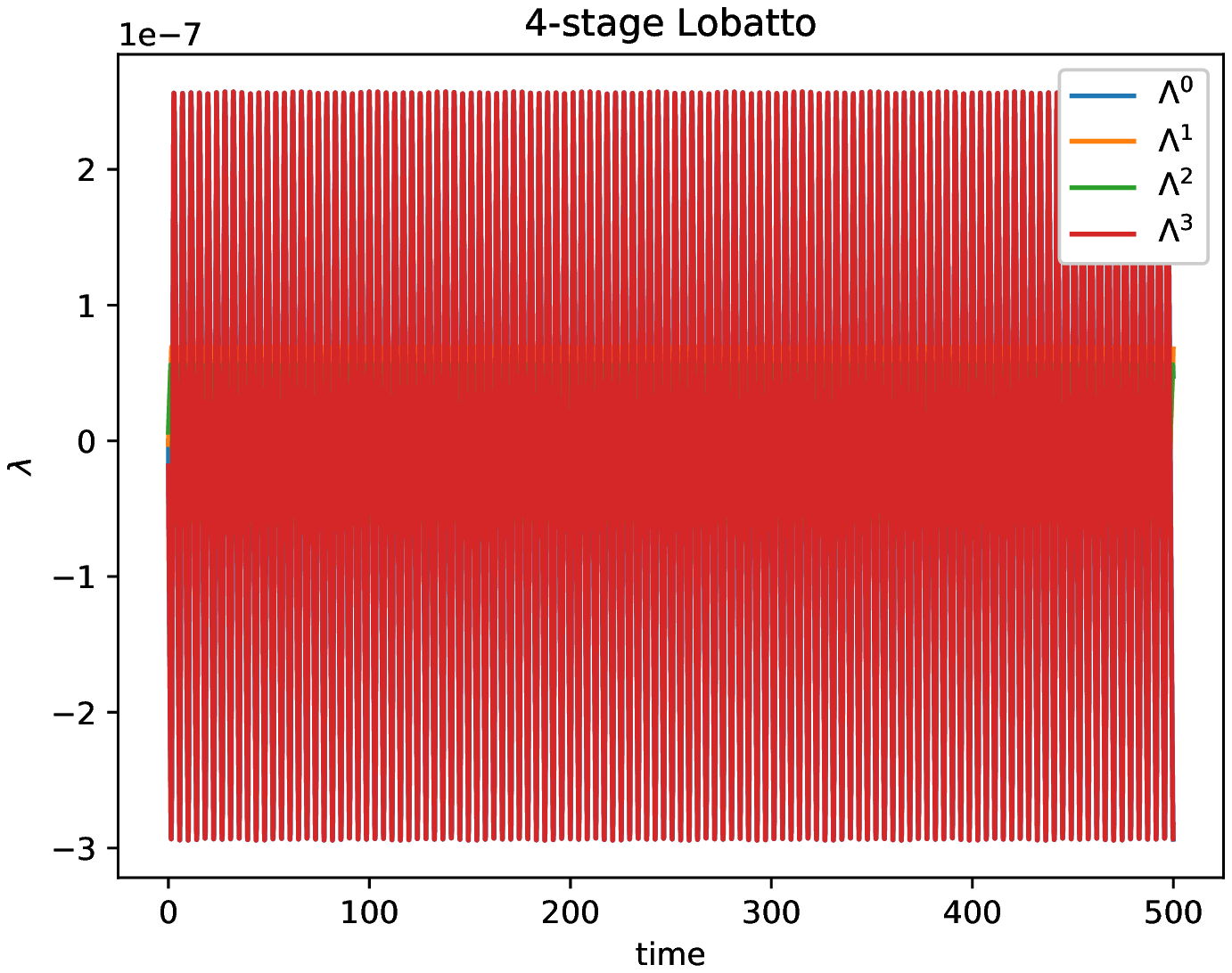}
		\caption{}\label{fig:pendulum_lambda_evolution_lobatto4}
	\end{subfigure}
	  \caption{Evolution of the Lagrange multipliers in the pendulum.}\label{fig:pendulum_lambda_evolution}
\end{figure}

In Figs.~\ref{fig:pendulum_lambda_evolution_lobatto2}, \ref{fig:pendulum_lambda_evolution_lobatto3} and \ref{fig:pendulum_lambda_evolution_lobatto4} we can see that in this case the 2 and 4-stage methods can maintain the dynamics of the multipliers bound but the 3-stage method cannot, and the multipliers drift. The rest of the system maintains the variational-like behaviour for as much as it can, but eventually the increasing amount of perturbation induced by the drifting multipliers will push the system far enough so that the system needs to reconfigure.\\

Instead of $\Lambda$-concatenation we can set $\Lambda^1_k = 0$ in accordance with the continuous case, or impose some other arbitrary equation such as $\sum_{j = 1}^s b_j \Lambda^j_k = 0$. As it turns out, both of these choices work rather well in this case, the latter of which can be seen in Figs.\ref{fig:pendulum_energy_evolution_lobatto3_mod} and \ref{fig:pendulum_lambda_evolution_lobatto3_mod}.\\

\begin{figure}[]
	\centering
	\begin{subfigure}[b]{0.45\textwidth}
		\centering
		\includegraphics[width=\textwidth]{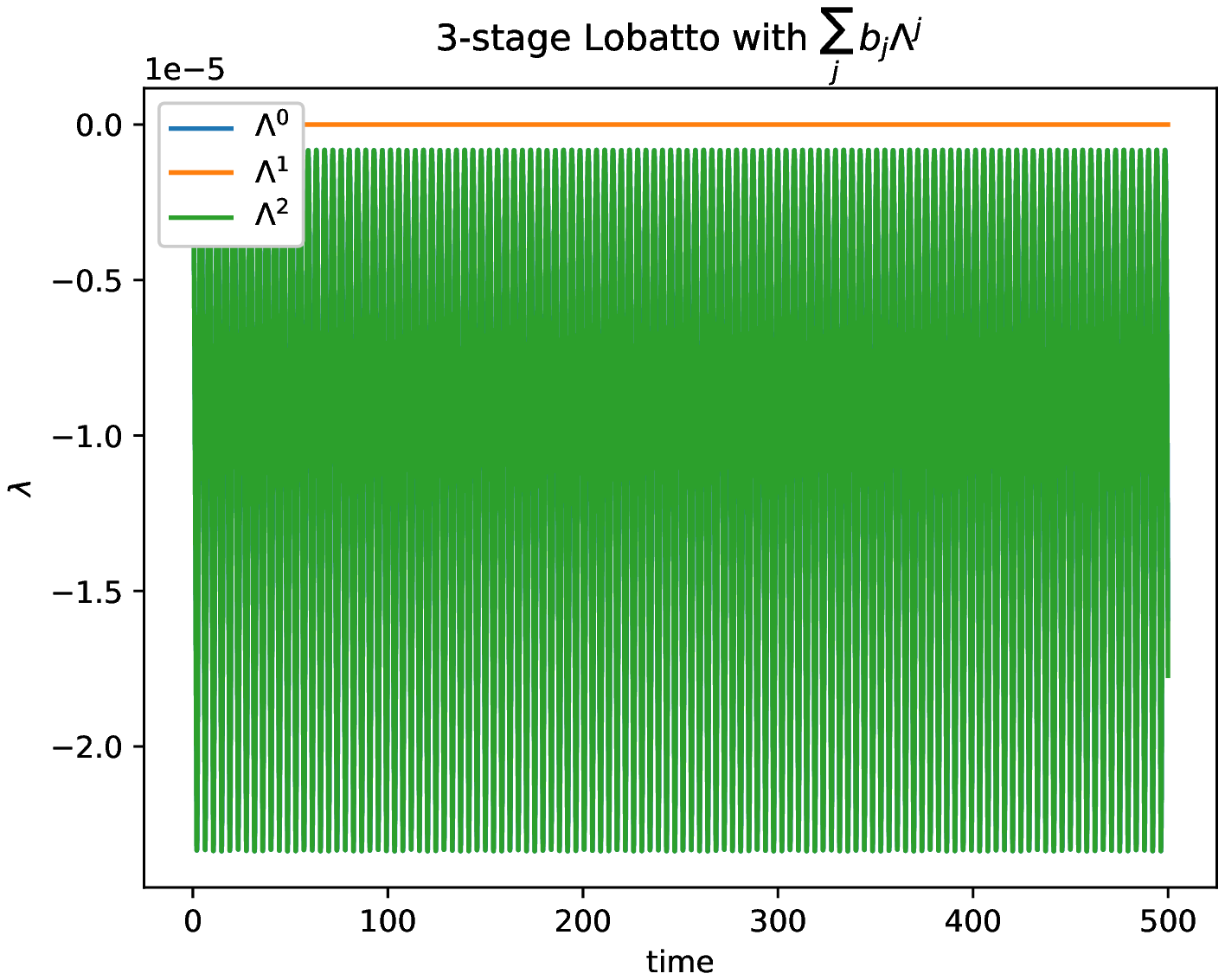}
		\caption{}\label{fig:pendulum_lambda_evolution_lobatto3_mod}
	\end{subfigure}
	\begin{subfigure}[b]{0.45\textwidth}
		\centering
		\includegraphics[width=\textwidth]{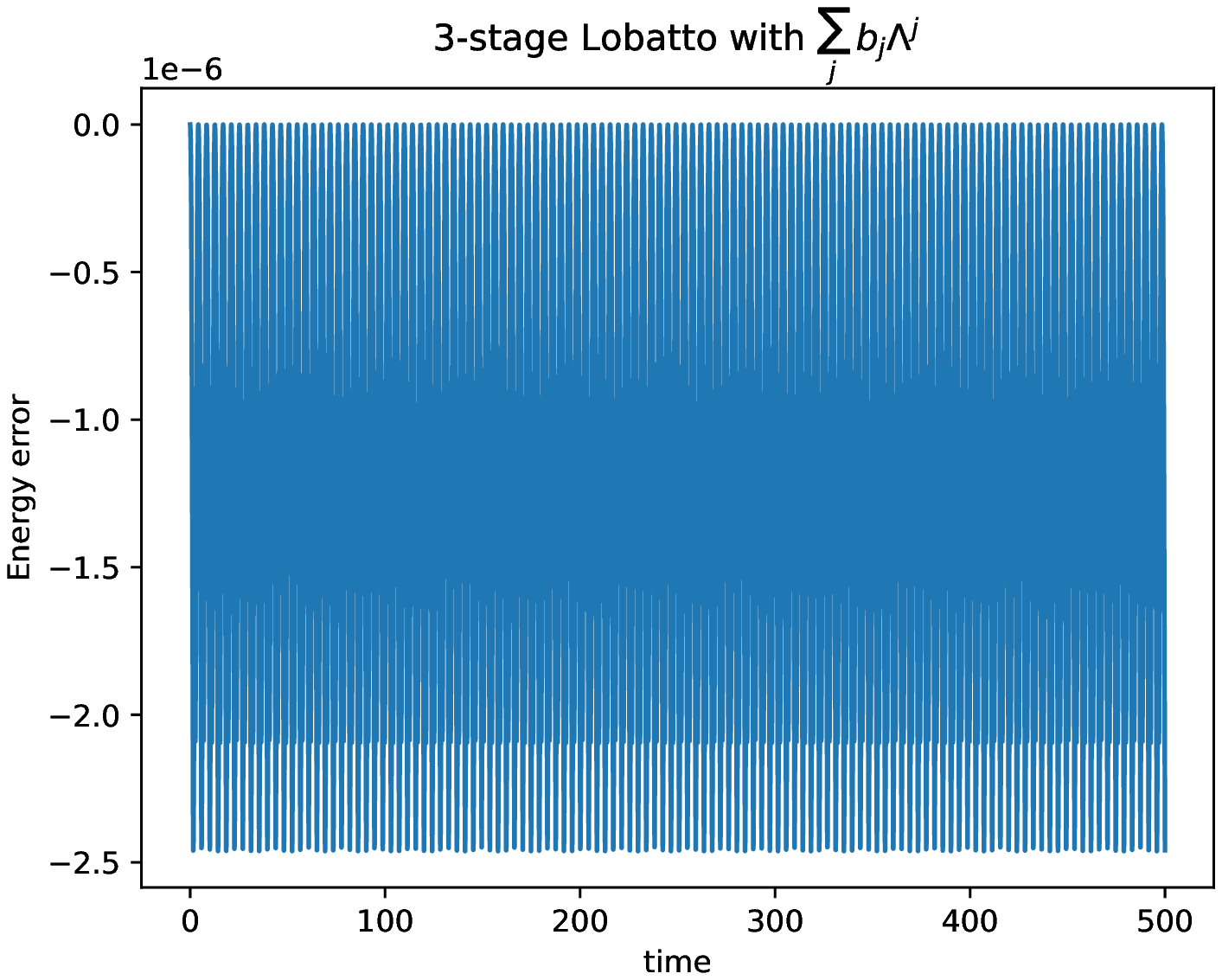}
		\caption{}\label{fig:pendulum_energy_evolution_lobatto3_mod}
	\end{subfigure}
	  \caption{Evolution of the energy and Lagrange multipliers of the pendulum integrated with the 3-stage Lobatto method with additional equation.}\label{fig:pendulum_evolution_mod}
\end{figure}

In Figs.~\ref{fig:order_pendulum_exp} and \ref{fig:order_pendulum_exp_lambda} we can see that the numerical order estimations correspond to the theoretical ones. The order of the multiplier for the 2-stage method is actually better than expected in this case.\\

\begin{figure}[ht]
	\centering
	\begin{subfigure}[b]{0.65\textwidth}
		\centering
		\includegraphics[width=\textwidth]{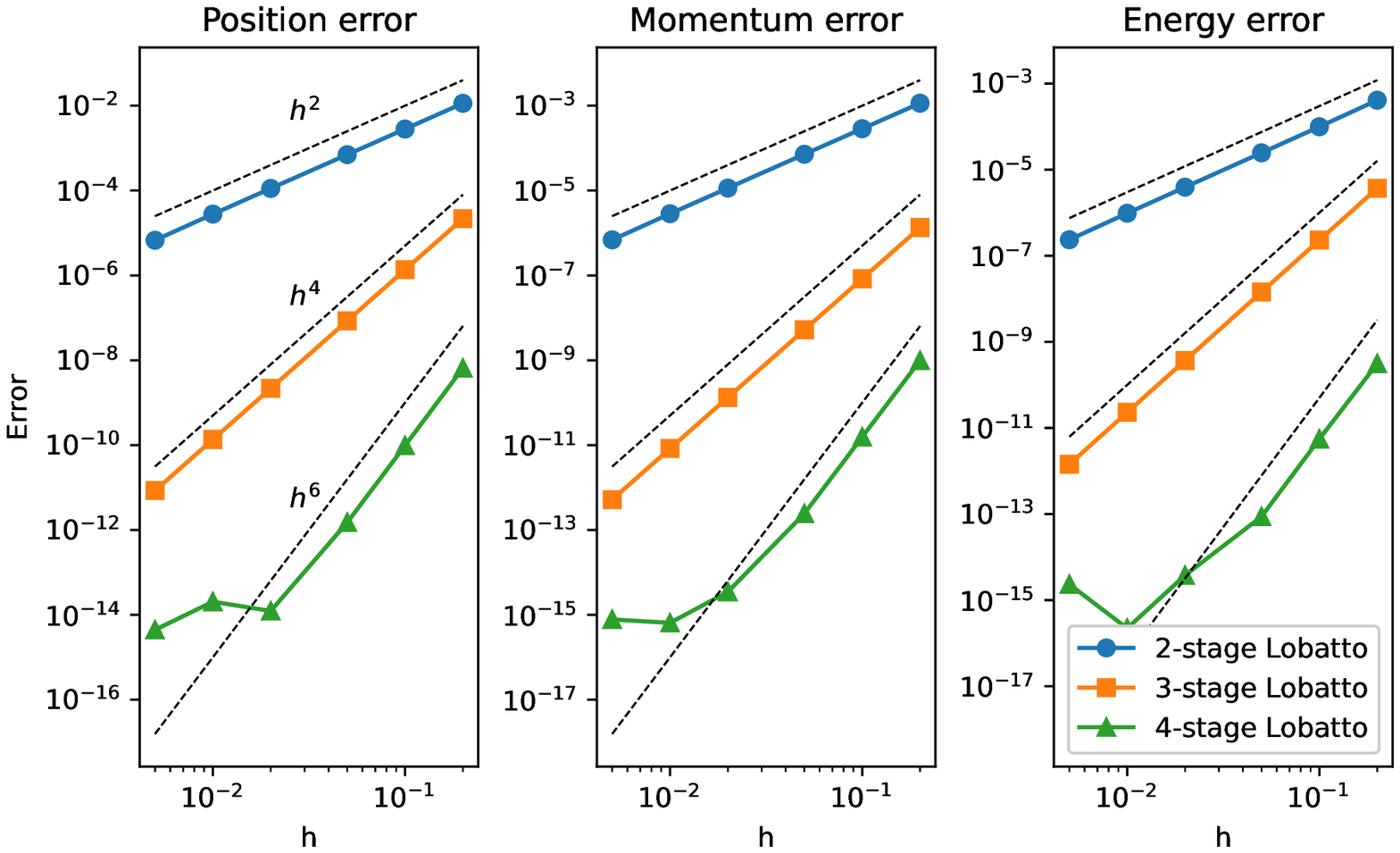}
	  \caption{}\label{fig:order_pendulum_exp}
	\end{subfigure}
	\begin{subfigure}[b]{0.25\textwidth}
		\centering
		\includegraphics[width=4.5cm]{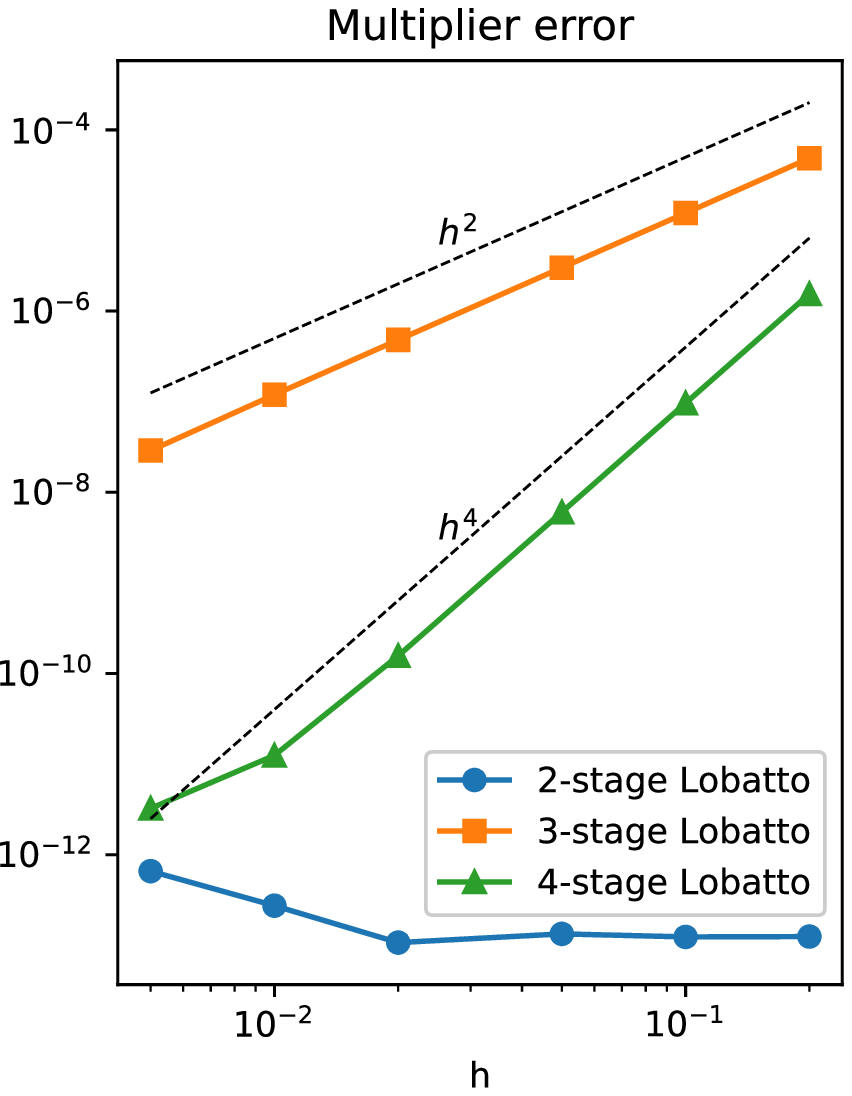}
	  \caption{}\label{fig:order_pendulum_exp_lambda}
	\end{subfigure}
	\caption{Error plots computed for the pendulum system using $\tau = \exp$. The nonlinear systems were solved using \texttt{scipy.optimize.root} with \texttt{tol=1e-14}, which explains the flattening of the curves.}
\end{figure}

As mentioned before, the same system can be integrated using a symplectic integrator without having to impose any constraint. When possible to use, symplectic integrators are superior to nonholonomic methods because we can automatically warrant the good long-term energy behaviour at any order and they do not restrict us to Lobatto-type methods. Moreover, they are computationally less expensive since no extra momentum equations need to be computed, the constraint need not be explicitly imposed and no Lagrange multipliers are used. However, it should be noted that using a symplectic integrator it is not possible to warrant that inner stages satisfy the constraint and in general they do not.

\subsection{Spherical Kepler problem}
The standard Kepler problem studies the dynamics of a particle of mass $m$ and position $x$ in Euclidean space induced by the gravitational potential generated by a massive fixed particle at position $X$. The Kepler potential is inversely proportional to the distance between the particles. The spherical version of the problem considers that the particles live in a spherical geometry and the potential is given by a fundamental solution of the Laplace-Beltrami equation on the sphere \cite{HaLuWa_StormerVerlet}.\\

The Lagrangian of this system is thus
\begin{equation*}
L(x,\dot{x}) = \frac{m}{2} \left\Vert \dot{x} \right\Vert^2 + \rho \frac{X \cdot x}{\sqrt{1 - (X \cdot x)^2}},
\end{equation*}
where $\rho$ parametrizes the strength of the gravitational potential. The trivialized and regularized Lagrangian of this system is
\begin{equation*}
\ell^{\mathrm{reg}}(g,\eta) = \frac{m}{2} \left\Vert \eta \times x_0 \right\Vert^2 + \frac{M}{2} (\eta \cdot x_0)^2 + \rho \frac{X \cdot (g x_0)}{\sqrt{1 - [X \cdot (g x_0)]^2}}\,, \quad M \neq 0.
\end{equation*}

\begin{figure}[]
	\centering
	\begin{minipage}{0.55\textwidth}
	\centering
		\includegraphics[width=\textwidth]{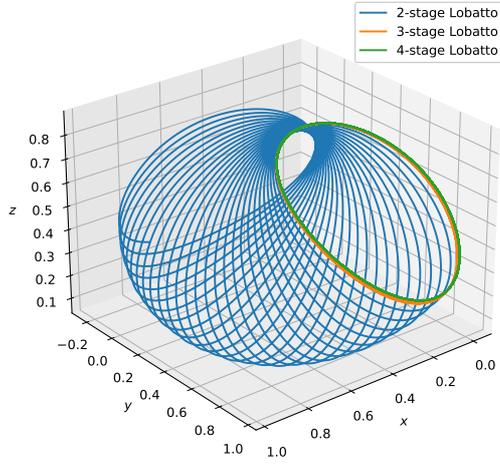}
	\end{minipage} \hfill
	\begin{minipage}{0.4\textwidth}
		\caption{Orbits of the spherical Kepler problem computed with $h = 0.01$. This system displays the characteristic precession of its planar counterpart when applying symplectic methods and for fixed $h$, the lower the order of the method the more noticeable the effect is.}\label{fig:kepler_orbits}
	\end{minipage}
\end{figure}
\begin{figure}[]
	\centering
	\begin{subfigure}[b]{0.5\textwidth}
	\centering
		\includegraphics[width=7cm]{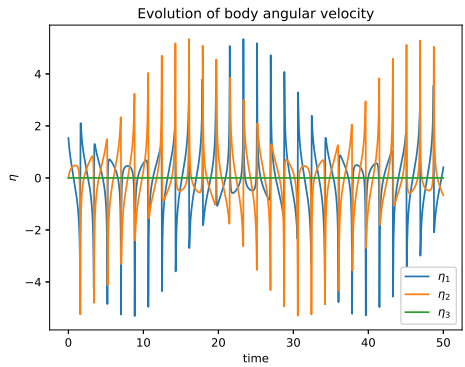}
		\caption{Evolution of the components of the velocity $\eta$.}
	\end{subfigure}
	\begin{subfigure}[b]{0.4\textwidth}
	\centering
		\includegraphics[width=7cm]{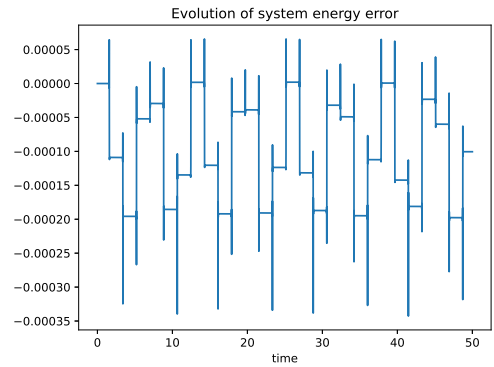}
		\caption{Evolution of the energy error.}
	\end{subfigure}
	\caption{Various evolution plots of the spherical Kepler problem computed with the 4-stage Lobatto method.}
	\label{fig:kepler_figures}
\end{figure}

In Fig.\ref{fig:kepler_orbits} and \ref{fig:kepler_figures} we offer several plots of numerical solutions of the problem for integrators of order 2, 4 and 6 corresponding to the 2, 3 and 4-stage Lobatto methods using the exponential map as retraction map $\tau$. The initial conditions have been chosen as $g_0 = (0.940125174120388, -0.693184358892293, 3.007331043590061)$, $\eta_0 = (1.534184084268850,0,0)$ and $\lambda_0 = \Lambda^1_0 = 0$.\\

This problem is fairly similar to the pendulum problem. As such it exhibits the same issues as that one, in particular, related to the 3-stage method.\\

\section{Conclusions and future work}
In this paper, high-order numerical integrators applied to homogeneous spaces have been presented as an application of nonholonomic partitioned RKMK methods on Lie groups. These methods are readily usable for any Lagrangian system on a homogeneous space. They offer flexibility in terms of how to exploit the symmetries of the system, allowing the choice of any trivialization, and in terms of how we decide to handle the constraints imposed by isotropy. The integrators obtained have provable order and seem to preserve in many cases the excellent behaviour of their variational counterparts. We have shown an application to a particular homogeneous space, $S^2 \cong SO(3)/SO(2)$, and two different dynamical systems, the spherical pendulum and the spherical Kepler problem.\\

However this algorithm has several drawbacks. One is that it requires a regular Lagrangian in the Lie group and there is currently no way to side-step this issue. Since by pulling the Lagrangian from the homogeneous space to the Lie group leads inevitably to a singular Lagrangian, this means the user is required to supply a suitable regularization. Fortunately, this can be easily accomplished provided the original Lagrangian is itself regular. Still, managing to eliminate this requirement would be desirable and a possible avenue of research.\\

A more complex matter that was mentioned in several occasions, is that these methods are affected by an instability issue most likely rooted in the way we handle the Lagrange multipliers. We believe that these instabilities can be fixed by supplying the right additional equations at each step. Finding such equations from first principles and geometry that can be applied universally is a matter of current research. The user may find some ad-hoc equations for the particular case and RK method used.

\bibliography{references}
\bibliographystyle{plain}

\end{document}